\newtheorem{theorem}{Theorem}[section]
\newtheorem{lemma}[theorem]{Lemma}
\newtheorem{proposition}[theorem]{Proposition}
\newtheorem{corollary}[theorem]{Corollary}
  \newtheorem{remark}[theorem]{Remark}
\newenvironment{proof}{    % De proof o de prueba seg\'{u}n convenga
  \noindent
  \textbf{Proof.}}{
  \hfill $\Box$
  \vspace{3mm}
}
\numberwithin{equation}{section}
\newcommand{\N}{\mathbb{N}} %% Conjunto naturales:     \N
\newcommand{\R}{\mathbb{R}} %% Conjunto reales:        \R
\newcommand{\C}{\mathbb{C}} %% Conjunto complejos:     \C
\newcommand{\D}{\mathbb{D}} %% Disco unidad:           \D
\begin{document}

\title{Solid hulls of weighted Banach spaces of entire functions.  }

\author{Jos\'{e} Bonet  and Jari Taskinen }

\date{}

\maketitle

\begin{abstract}
Given a continuous, radial, rapidly decreasing  weight $v$ on the complex plane, we study the solid hull of its associated weighted space $H_v^\infty(\C)$ of all the entire functions $f$ such that $v|f|$ is bounded. The solid hull is found for a large
class of weights satisfying the condition (B) of Lusky. Precise formulations are
obtained for weights of the form $v(r)=\exp(-ar^p), a>0, p>0$. Applications to spaces of multipliers are included. 
\end{abstract}

%% Footnotes
\renewcommand{\thefootnote}{}
\footnotetext{\emph{2010 Mathematics Subject Classification.}
Primary: 46E15, secondary: 30D15; 30H20; 46B45; 46E05. }%
\footnotetext{\emph{Key words and phrases.} Weighted Banach spaces of entire functions; Taylor coefficients; solid hull; solid core. }%

\section{Introduction and first results.} \label{sec1}

The aim of this paper is to investigate the solid hull of weighted Banach spaces  $H_v^\infty(\C)$ of all
entire functions $f$ such that $\Vert f \Vert_v:= \sup_{z \in \C} v(z)|f(z)|$ is finite. In what follows, we identify an entire function $f(z)=\sum_{n=0}^\infty a_n z^n$ with the sequence of its Taylor coefficients $(a_n)_{n=0}^\infty$. For example in the case $v(z) = e^{-|z|}, z \in \C,$ we show in
Theorem \ref{solidhull1} that the solid hull
consists precisely of complex sequences $(b_m)_{m=0}^\infty$ such that
\begin{equation}
\sup_{n \in \mathbb{N}}
\sum_{m=n^2 + 1 }^{(n+1)^2} |b_m|^2
e^{-2n^2} n^{4m}  %%\Big)^{1/2}
< \infty . \nonumber
\end{equation}
We are also able to characterize in Theorem \ref{solidhull}, the solid hulls for a quite general class of  weights in terms of numerical sequences defined by  Lusky, \cite{L},
in his investigations of the isomorphic classes of the spaces  $H_v^\infty(\C)$.
This class of weights includes those satisfying condition (B) of \cite{L}, see
Remark \ref{remLB} and Corollary \ref{corex}.
The calculation of the numerical sequences for some important weights $v$ is one of the
results of our paper, see Proposition \ref{exampleexponential}.
In addition to techniques of \cite{L}, our approach  uses
the methods of Bennet, Stegenga and Timoney in their paper \cite{BST}, where
the solid hull and the solid core of the weighted spaces $H^\infty_v(\D)$ were
determined for doubling weights $v$ on the open unit disc $\D$. In Section \ref{sec4} we show that our results in Section \ref{solid} can be used to determine space of multipliers from $H_v^\infty(\C)$ into $\ell^p, 1 \leq p \leq \infty$.

The solid hull and multipliers on spaces of analytic functions on the disc has been investigated by many authors. In addition to  \cite{BST}, we mention here a non exhaustive sample:  \cite{AS}, \cite{Bl}, \cite{BRV}, \cite{D}, \cite{JP}, \cite{V} and the list of references in \cite{BP}. Moreover, the papers \cite{BG}, \cite{Li} and \cite{Tung} investigate the behavior of the Taylor coefficients of entire functions belonging to weighted spaces similar to those considered in this paper.
Spaces of type $H_v^\infty(\C)$ and $H^\infty_v(\D)$ appear
in the study of growth conditions of
analytic functions and have been investigated in various articles since
the work of Shields and Williams, see {\it e.g.} \cite{BBG},\cite{BBT}, \cite{BT}, \cite{L0},
\cite{L}, \cite{SW} and the references therein.

A {\it weight} $v$   is a continuous function  $v: [0, \infty[ \to ]0,  \infty [$, which is non-increasing on $[0,\infty[$ and it is rapidly decreasing, i.e.\ it satisfies
$\lim_{r \rightarrow \infty} r^m v(r)=0$ for each $m \in \N$. We extend $v$ to $\C$ by $v(z):= v(|z|)$. For
such a weight, the {\it weighted Banach space of entire functions}
is defined by
\begin{center}
$H^\infty_v(\C) := \{ f \in H(\C) \, : \,  \Vert f \Vert_v := \sup_{z \in \C} v(|z|) |f(z)| <
 \infty \}$,
\end{center}
and it is  endowed with the weighted sup norm $\Vert \cdot  \Vert_v .$  Spaces of this type are also called sometimes \textit{weighted Fock spaces of infinite order}. For an entire function $f \in H(\C)$, we denote $M(f,r):= \max\{|f(z)| \ | \ |z|=r\}$. Using the notation $O$ and $o$ of Landau, $f \in H_v^\infty(\C)$ if and only if $M(f,r)=O(1/v(r)), r \rightarrow \infty$. The symbol $\N$ stands for the natural numbers $n=1,2,3,...$.

As we already mentioned, an entire function $f(z)=\sum_{n=0}^\infty a_n z^n$ is identified with the sequence of its Taylor coefficients $(a_n)_{n=0}^\infty$, that will be also denoted sometimes by $(a_n)_n$.
As is well-known, it is often impossible to characterize standard Banach spaces of
entire functions in terms of the Taylor coefficients; this is for example true
for the function spaces $H_v^\infty (\C)$.  The next best thing is to find the
strongest growth condition that the coefficients have to satisfy. This motivates
the concept of a solid hull, and we now recall the related
definitions and facts from \cite{AS}.

Let $A$ and $B$ be vector spaces of complex sequences containing the space of all the sequences with finitely many non-zero coordinates. The space $A$ is \textit{solid} if $a=(a_n) \in A$ and $|b_n| \leq |a_n|$ for each $n$ implies $b=(b_n) \in A$. The \textit{solid hull of $A$} is

$$S(A):= \{ (c_n) \, : \, \exists (a_n) \in A \ \mbox{such that} \ |c_n| \leq |a_n| \ \forall n \in \N \}. $$

\noindent The \textit{solid core of $A$} is

$$s(A):=\{ (c_n) \, : \, (c_na_n) \in A \ \forall (a_n) \in \ell_\infty \}.$$

\noindent The \textit{set of multipliers form $A$ into $B$} is

$$(A,B):= \{ c=(c_n) \, : \, (c_na_n) \in B \ \forall (a_n) \in A \}. $$

\medskip

\noindent \textbf{Facts:} 1. $A$ is solid if and only if $\ell_\infty \subset (A,A)$.

\noindent 2. $A \subset (B,C)$ if and only if $B \subset (A,C)$.

\noindent 3. The solid core $s(A)$ of $A$ is the largest solid space contained in $A$. Moreover $s(A)=(\ell_\infty,A)$.

\noindent 4. The solid hull $S(A)$ of $A$ is the smallest solid space containing $A$.

\noindent 5. If $X$ is solid, $(A,X)=(S(A),X)$ and $(X,A)=(X,s(A))$.

\medskip

We conclude this section with our first results.

\begin{proposition}\label{core}
The solid core of $H^\infty_v(\C)$ is
$$
s(v,\C):=\{ (a_n)_n \, : \, \Vert(a_n)\Vert_{1,v,\C} := \sup_{r>0} v(r) \sum_{n=0}^\infty |a_n| r^n < \infty \}.
$$
\end{proposition}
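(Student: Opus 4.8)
The plan is to prove the two inclusions $s(H^\infty_v(\C)) \subseteq s(v,\C)$ and $s(v,\C) \subseteq s(H^\infty_v(\C))$ separately, using the characterization $s(A) = (\ell_\infty, A)$ from Fact 3.

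For the inclusion $s(v,\C) \subseteq s(H^\infty_v(\C))$, I would start from a sequence $(a_n)_n$ with $\|(a_n)\|_{1,v,\C} < \infty$ and an arbitrary $(\lambda_n) \in \ell_\infty$, say $\sup_n |\lambda_n| = C$. I need to show that $\sum_n \lambda_n a_n z^n$ defines a function in $H^\infty_v(\C)$. First, the bound $v(r)\sum_n |a_n| r^n < \infty$ for every $r$ forces $\limsup_n |a_n|^{1/n} = 0$ (since $|a_n| r^n \le \|(a_n)\|_{1,v,\C}/v(r)$ for all $r$), so $\sum_n \lambda_n a_n z^n$ is entire. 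Then for $|z| = r$,
\[
v(r)\Big| \sum_{n=0}^\infty \lambda_n a_n z^n \Big| \le v(r) \sum_{n=0}^\infty |\lambda_n|\,|a_n| r^n \le C\, v(r) \sum_{n=0}^\infty |a_n| r^n \le C\,\|(a_n)\|_{1,v,\C},
\]
so $(\lambda_n a_n)_n \in H^\infty_v(\C)$. Since $(\lambda_n)$ was arbitrary in $\ell_\infty$, this gives $(a_n)_n \in (\ell_\infty, H^\infty_v(\C)) = s(H^\infty_v(\C))$.

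For the reverse inclusion, let $(a_n)_n \in s(H^\infty_v(\C)) = (\ell_\infty, H^\infty_v(\C))$. I want to conclude $\sup_{r>0} v(r)\sum_n |a_n| r^n < \infty$. The natural device is to choose the multiplier that restores the moduli and cancels the phases: set $\lambda_n := \overline{a_n}/|a_n|$ (and $\lambda_n := 0$ when $a_n = 0$), so $(\lambda_n) \in \ell_\infty$ with norm $\le 1$, and the function $g(z) := \sum_n \lambda_n a_n z^n = \sum_n |a_n| z^n$ lies in $H^\infty_v(\C)$ by hypothesis. Then, evaluating $g$ on the positive real axis, for each $r > 0$,
\[
v(r) \sum_{n=0}^\infty |a_n| r^n = v(r)\, g(r) \le \|g\|_v < \infty,
\]
and taking the supremum over $r > 0$ gives $\|(a_n)\|_{1,v,\C} \le \|g\|_v < \infty$, i.e. $(a_n)_n \in s(v,\C)$.

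The argument is essentially a packaging of standard facts, so there is no deep obstacle; the one point that needs a word of care is the use of Fact 3, namely that $s(A) = (\ell_\infty, A)$, which reduces the problem to testing against all bounded multiplier sequences, and the observation that $s(v,\C)$ as defined is genuinely a solid vector space containing the finitely supported sequences (so that the notion of solid core applies to it) — both routine. One should also note in passing that $s(v,\C)$ is a Banach space under $\|\cdot\|_{1,v,\C}$, though this is not strictly needed for the set-theoretic identity asserted in the proposition.
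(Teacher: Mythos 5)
Your proof is correct and follows essentially the same route as the paper: both inclusions reduce to the observation that for $g(z)=\sum_n |a_n| z^n$ one has $M(g,r)=\sum_n |a_n| r^n$, combined with the triangle inequality for the easy direction. The only cosmetic difference is that you invoke the characterization $s(A)=(\ell_\infty,A)$ with the unimodular multiplier $\overline{a_n}/|a_n|$, whereas the paper uses the equivalent description of $s(A)$ as the largest solid space contained in $A$; the underlying computation is identical.
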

\begin{proof}
Given $(a_n)_n \in s(v,\C)$, the function $f(z)= \sum_{n=0}^\infty a_n z^n$ is clearly entire.
Moreover $v(|z|)|f(z)| \leq \Vert(a_n)_n\Vert_{1,v,\C}$ for each $z \in \C$, and $f \in H^\infty_v(\C)$.

To see the other inclusion, let $A$ be a solid sequence space contained in $H^\infty_v(\C)$, i.e.\ for each $(a_n)_n \in A$, $(|a_n|)_n \in A$ and $g(z)= \sum_{n=0}^\infty |a_n| z^n \in H^\infty_v(\C)$. Clearly, $M(g,r)= \sup_{|z|=r} \big| \sum_{n=0}^\infty |a_n| z^n \big| = \sum_{n=0}^\infty |a_n| r^n$. Therefore
$$\sup_{r>0} v(r) \sum_{n=0}^\infty |a_n| r^n = \sup_{r>0} v(r)M(g,r) < \infty.$$
\end{proof}

\begin{remark} Write for $n \in \N$, $\Vert z^n \Vert_v:= \sup_{r>0} v(r)r^n$. Clearly
$\Vert z^0\Vert_v=v(0)$. The weighted $\ell_1$ space $\ell_1((\Vert z^n\Vert_v)$ of all those complex sequences $(a_n)_n$ such that $\sum_{n=0}^\infty |a_n| \Vert z^n\Vert_v < \infty$ is contained in the solid core $s(v,\C)$. However, in general the inclusion is strict. Indeed, if $v(r)=e^{-r}, r \geq 0$, then $\Vert z^n\Vert_v = (n/e)^n$ as a direct calculation shows. Take $a_n:=1/n!, \ n=0,1,2,...$. For each $r>0$, we have $v(r) \sum_{n=0}^\infty  a_n r^n =1$. However, the series $\sum_{n=0}^\infty (n^n/n! e^n)$ diverges by the Stirling's formula $n! \sim (2 \pi n)^{1/2} (n/e)^n$.
\end{remark}

Our next elementary result about the behavior of the Taylor coefficients of elements $f \in H_v^\infty(\C)$, that holds for arbitrary weights $v$,  clarifies the importance of the study of the solid hull of $H_v^\infty(\C)$ in Section \ref{solid}.

\begin{proposition}\label{hull}
The Taylor coefficients of an entire function $f(z)=\sum_{n=0}^\infty a_n z^n$ in $H^\infty_v(\C)$ satisfy

\begin{itemize}

\item[(i)] $\sup_{r>0} v(r) \left(\sum_{n=0}^\infty |a_n|^2 r^{2n} \right)^{1/2} \leq \Vert f\Vert_v$, and

\item[(ii)] $\sup_{n} |a_n| \Vert z^n\Vert_v \leq \Vert f\Vert_v$.

\end{itemize}
\end{proposition}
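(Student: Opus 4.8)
The plan is to prove both estimates via the standard Parseval/Cauchy integral computation for Taylor coefficients on circles, combined with the defining bound $v(|z|)|f(z)| \le \|f\|_v$. For part (i), I would fix $r>0$ and write, for $f(z)=\sum_{n=0}^\infty a_n z^n$, the identity
$$
\sum_{n=0}^\infty |a_n|^2 r^{2n} = \frac{1}{2\pi}\int_0^{2\pi} |f(re^{i\theta})|^2\, d\theta,
$$
which is Parseval's formula applied to the (absolutely and uniformly convergent on $|z|=r$) Taylor series of $f$. Since $|f(re^{i\theta})| \le \|f\|_v / v(r)$ for every $\theta$, the right-hand side is bounded above by $\|f\|_v^2 / v(r)^2$. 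Multiplying by $v(r)^2$ and taking square roots gives
$$
v(r)\left(\sum_{n=0}^\infty |a_n|^2 r^{2n}\right)^{1/2} \le \|f\|_v,
$$
and since $r>0$ was arbitrary we may take the supremum over $r$, which is exactly (i).

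For part (ii), I would use the Cauchy estimate: for each fixed $n$ and each $r>0$,
$$
|a_n| = \left| \frac{1}{2\pi i}\int_{|z|=r} \frac{f(z)}{z^{n+1}}\, dz \right| \le \frac{M(f,r)}{r^n} \le \frac{\|f\|_v}{v(r)\, r^n}.
$$
Hence $|a_n|\, v(r)\, r^n \le \|f\|_v$ for all $r>0$, and taking the supremum over $r$ gives $|a_n|\,\|z^n\|_v \le \|f\|_v$, where $\|z^n\|_v = \sup_{r>0} v(r) r^n$ as defined in the preceding remark. Taking the supremum over $n$ yields (ii). Note that (ii) also follows directly from (i) by retaining only the single term with index $n$ in the sum under the square root, so one could present (ii) as an immediate corollary of (i); I would likely do that to shorten the argument, but the self-contained Cauchy-estimate version is equally clean.

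There is essentially no obstacle here: the only points requiring a word of care are the justification that the Taylor series of $f$ converges uniformly on each circle $|z|=r$ (so that term-by-term integration in Parseval's identity is legitimate), which is immediate since $f$ is entire, and the observation that $M(f,r) = \max_{|z|=r}|f(z)| \le \|f\|_v/v(r)$ is just a restatement of the definition of $\|\cdot\|_v$ together with $v(z)=v(|z|)$. Both estimates are sharp in the sense that they are the natural $\ell^2$- and $\ell^\infty$-type necessary conditions on the coefficients, and they motivate the two-sided characterization of the solid hull pursued in Section~\ref{solid}.
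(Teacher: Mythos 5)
Your proof is correct and follows essentially the same route as the paper: the paper's appeal to the inclusion $H^\infty\subset H^2$ (i.e.\ $(\sum|b_n|^2)^{1/2}\le M(h,1)$) applied to the dilate $g(\zeta)=f(r\zeta)$ is exactly your Parseval computation on the circle $|z|=r$, and the paper likewise deduces (ii) from (i).
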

\begin{proof}
$(i)$ Since $H^\infty$ is contained in $H^2$, for each function $h(z)= \sum_{n=0}^\infty b_n z^n$ analytic in a neighbourhood of $\D$, we have $(\sum_{n=0}^\infty |b_n|^2)^{1/2} \leq M(h,1)$. Now, given
$f(z)=\sum_{n=0}^\infty a_n z^n \in H^\infty_v(\C)$ and $r>0$, set $g(\zeta):=f(r \zeta)$, that is clearly an analytic function in a neighbourhood of $\D$. Since $g(\zeta)=f(r \zeta) = \sum_{n=0}^\infty a_n r^n \zeta^n$, we conclude
$$
\left(\sum_{n=0}^\infty |a_n|^2 r^{2n} \right)^{1/2} \leq M(g,1) =M(f,r).
$$
This implies the inequality in the statement (i).

(ii) follows from (i).
\end{proof}

\begin{remark} In general it is not true that $f \in H^\infty_v(\C)$ implies that $(|a_n| \Vert z^n\Vert_v)_n \in \ell_2$. To see this, take again $v(r)=e^{-r}, r \geq 0$. Clearly $e^z = \sum_{n=0}^\infty (1/n!) z^n$ belongs to $H^\infty_v(\C)$. However, the series $\sum_{n=0}^\infty (|a_n| \Vert z^n\Vert_v)^2 = \sum_{n=0}^\infty (n^n/n! e^n)^2$ diverges because $(n^n/n! e^n)^2 \sim 1/n$ by Stirling's formula.
\end{remark}

\section{The solid hull of $H^\infty_v(\C)$.}\label{solid}

We fix for this section a weight $v: \C \to ]0,\infty[$ satisfying the general hypothesis
made in Section \ref{sec1}. Our next aim is to   characterize  the solid hull of $H^\infty_v(\C)$ for weights
satisfying the additional condition \eqref{35}, below.
Let us start by introducing some notation used in \cite{L}.

We denote by $[x]$ be the largest
integer less or equal $x$ for a given real number $x \in \R$.
Given $m>0$, we denote by $r_m$ the global maximum point of $r^m v(r)$. Then $r_m \rightarrow \infty$ as $m \rightarrow \infty$. For example, if $v(r)= \exp(-\alpha r^p)$, then $r_m=(m/\alpha p)^{1/p}, \ m>0$.
Given an entire function $f(z)= \sum_{k=0}^\infty a_k z^k$, and $0<m<n$ (not necessarily integers) we define the following operators of de la Vall\'ee-Poussin type:
$$
V_{n,m}f:= \sum_{0 \leq k \leq m } a_k z^k + \sum_{m < k \leq n} \frac{[n]-k}{[n]-[m]} a_k z^k,
$$
$$
V_{p,0}f:= \sum_{0 \leq k \leq p}  \frac{[p]-k}{[p]} a_k z^k \, ;
$$
here and later,  the summation is performed over integers belonging to
the given intervals, although the endpoints of the intervals need not be
integers. We also denote, for $0<m<n$,
\begin{equation}
A(m,n):= \left(\frac{r_{m}}{r_{n}}\right)^{m} \frac{v(r_{m})}{v(r_{n})} \ \
\ \mbox{and}  \ \ \
B(m,n):= \left(\frac{r_{n}}{r_{m}}\right)^{n} \frac{v(r_{n})}{v(r_{m})}.
\label{28}
\end{equation}

Several results of Lusky, \cite{L}, will be needed below.
We start with the following lemma.

\begin{lemma} \label{luskylemma1}
(\cite{L}, Corollary 3.2 (b))
Let $0<m<n$ and let $Q(z)=\sum_{m < k \leq n} a_k z^k$ be a polynomial. Then
$$ \Vert Q\Vert_v \leq 2 A(m,n) \sup_{|z|=r_n} |Q(z)| v(z),$$
and
$$ \Vert Q\Vert_v \leq 2 B(m,n) \sup_{|z|=r_m} |Q(z)| v(z).$$

\end{lemma}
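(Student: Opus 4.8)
The plan is to prove both inequalities by one common scheme (which, incidentally, yields the constant $1$ in place of $2$). We may assume $Q\not\equiv 0$. The preliminary observation is that $Q$ involves only exponents $k$ with $p:=[m]+1\le k\le[n]=:q$, and that $m<p\le q\le n$; set $P(z):=Q(z)/z^p$, which is a polynomial. One should resist the crude bound $|Q(z)|\le\sum_k|a_k|r^k$, since the triangle inequality over the coefficients costs a factor of order $q-p$. Instead we transport across circles two better-behaved quantities: the weighted monomials $v(r)r^k$, and the maximum modulus function $M(Q,\cdot)$ itself.

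\textit{Step 1 (weighted monomials).} First I would prove that for every integer $k$ with $m<k\le n$ and every $r>0$,
\[
v(r)\,r^k\ \le\ A(m,n)\,v(r_n)\,r_n^{\,k}\qquad\text{and}\qquad v(r)\,r^k\ \le\ B(m,n)\,v(r_m)\,r_m^{\,k}.
\]
This follows from the identity $v(r)r^k=\big(v(r)r^m\big)^{(n-k)/(n-m)}\big(v(r)r^n\big)^{(k-m)/(n-m)}$ (linear interpolation of the exponent between $m$ and $n$), the trivial bounds $v(r)r^m\le v(r_m)r_m^m$ and $v(r)r^n\le v(r_n)r_n^n$ coming from the definition of $r_m,r_n$, and a bookkeeping of exponents; after simplification the two claims reduce respectively to $v(r_n)r_n^m\le v(r_m)r_m^m$ and $v(r_m)r_m^n\le v(r_n)r_n^n$, which hold again because $r_m$, $r_n$ are the global maxima of $r^mv(r)$, $r^nv(r)$. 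In particular, taking $\sup_{r>0}$ gives $\sup_{r>0}v(r)r^k\le A(m,n)v(r_n)r_n^k$ and $\le B(m,n)v(r_m)r_m^k$; I will need this for $k=p$ and $k=q$.

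\textit{Step 2 (shape of $M(Q,\cdot)$, and conclusion).} Since $Q(z)/z^p=P(z)$ is a polynomial, the maximum modulus principle makes $r\mapsto M(Q,r)r^{-p}=M(P,r)$ non-decreasing; since $Q(z)/z^q$ is a polynomial in $1/z$, $r\mapsto M(Q,r)r^{-q}$ is non-increasing. Hence, writing $\rho$ for $r_n$ (resp.\ $r_m$),
\[
M(Q,r)\le(r/\rho)^pM(Q,\rho)\ \ (0<r\le\rho),\qquad M(Q,r)\le(r/\rho)^qM(Q,\rho)\ \ (r\ge\rho),
\]
so $v(r)M(Q,r)\le\rho^{-p}M(Q,\rho)\,v(r)r^p$ for $r\le\rho$ and $v(r)M(Q,r)\le\rho^{-q}M(Q,\rho)\,v(r)r^q$ for $r\ge\rho$. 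Taking $\sup_{r>0}$ and feeding in the Step 1 bounds ($k=p$ on $]0,\rho]$, $k=q$ on $[\rho,\infty[$) makes the powers of $\rho$ cancel, leaving $\Vert Q\Vert_v=\sup_{r>0}v(r)M(Q,r)\le A(m,n)v(r_n)M(Q,r_n)=A(m,n)\sup_{|z|=r_n}|Q(z)|v(z)$ when $\rho=r_n$; the same computation with $\rho=r_m$ gives the $B(m,n)$-bound.

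I do not expect a genuine obstacle. The one point needing a little care is that $a_p$ or $a_q$ may vanish, so the smallest or largest exponent actually occurring in $Q$ differs from $p$, $q$; this is harmless, since $(r/\rho)^{p'}\le(r/\rho)^p$ for $p'\ge p$ when $r\le\rho$ and $(r/\rho)^{q'}\le(r/\rho)^q$ for $q'\le q$ when $r\ge\rho$, so $p$ and $q$ remain legitimate exponents in Step 2. The only genuinely substantive ingredient is Step 1.
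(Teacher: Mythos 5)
The paper itself gives no proof of this lemma: it is quoted directly from Lusky \cite{L}, Corollary 3.2(b), so there is no in-paper argument to compare yours against. That said, your proof is correct and self-contained. Step 1 checks out: with $\theta=(k-m)/(n-m)$ the identity $v(r)r^k=(v(r)r^m)^{1-\theta}(v(r)r^n)^{\theta}$ together with $v(r)r^m\le v(r_m)r_m^m$ and $v(r)r^n\le v(r_n)r_n^n$ reduces the two claims exactly to $v(r_n)r_n^m\le v(r_m)r_m^m$ and $v(r_m)r_m^n\le v(r_n)r_n^n$, both true by the definition of $r_m,r_n$. Step 2 is also sound: $M(Q,r)r^{-p}$ is non-decreasing and $M(Q,r)r^{-q}$ is non-increasing for $p=[m]+1$, $q=[n]$, because $Q/z^p$ is a polynomial and $Q/z^q$ a polynomial in $1/z$ regardless of whether $a_p$ or $a_q$ vanish --- so your closing caveat about $p'$ and $q'$ is not actually needed. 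Combining the two steps with $\rho=r_n$ (resp.\ $\rho=r_m$) and $k=p$ on $]0,\rho]$, $k=q$ on $[\rho,\infty[$ gives the stated bounds, and indeed with constant $1$ in place of $2$, i.e.\ a slightly sharper conclusion than the cited one. The only degenerate case, $[m]=[n]$, forces $Q\equiv 0$ and is covered by your opening reduction.
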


Given a strictly increasing sequence $(m_n)_{n=1}^{\infty}$ with $\lim_{n \rightarrow \infty} m_n =
\infty$, we define $m_0=0$ and $V_n := V_{m_{n+1},m_n} - V_{m_{n},m_{n-1}},
n \in \N$. For each $n \in \N, n \geq 2,$
$V_nf$ is a polynomial with all terms of degree  at least $m_{n-1} + 1 $ and
at most $m_{n+1}$, for all $f \in H(\C)$. In fact, for each $f(z)=\sum_{n=0}^\infty f_n z^n\in H(\C)$, we have
$$
V_nf(z)= \sum_{m_{n-1} < m \leq m_{n+1}} \gamma_m f_m z^m
= \sum_{m= [m_{n-1}] + 1}^{[ m_{n+1}]} \gamma_m f_m z^m ,
$$
where the numbers $\gamma_m \in [0,1]$  are
$$
\gamma_m = \frac{[m_{n+1}]-m}{[m_{n+1}]-[m_n]}, \ \ m_n < m \leq m_{n+1}
$$
and
$$
\gamma_m = \frac{m-[m_{n-1}]}{[m_{n}]-[m_{n-1}]}, \ \ m_{n-1} < m \leq m_{n}.
$$
The sum in $V_1f$ is understood to go from $m=0$ to $m=m_1$.

\begin{lemma}\label{luskylemma2} (\cite{L}, Prop. 3.4 (b))
If the sequence $(m_n)_n$ with $\lim_{n \rightarrow \infty} m_n = \infty$ satisfies
$$
\min(A(m_n,m_{n+1}),B(m_n,m_{n+1})) \geq b
$$
for some $b>2$, then there is $D>0$ such that $\Vert V_n\Vert  \leq D$ for each $n \in \N$, where $\Vert .\Vert $ is the operator norm in $L(H^\infty_v(\C))$ with respect to $\Vert \cdot \Vert_v$.
\end{lemma}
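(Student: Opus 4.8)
The plan is to prove the uniform bound $\|V_n\|\le D$ by estimating $\|V_n f\|_v$ for a fixed $f\in H^\infty_v(\C)$; the estimate is reduced to a handful of distinguished circles by means of Lemma~\ref{luskylemma1}, and the condition on $A$ and $B$ is used to make all constants independent of $n$.

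First I would exploit the description of $V_n f$ recalled above: it is a polynomial whose non-zero Taylor coefficients have degree in $(m_{n-1},m_{n+1}]$, and on every circle $|z|=r$ it is a de la Vall\'ee-Poussin average of $f$, namely $V_n f(re^{i\theta})=\frac{1}{2\pi}\int_0^{2\pi}f(re^{i(\theta-t)})\,K_n(t)\,dt$, where $K_n$ is the trigonometric polynomial with Fourier coefficients $\gamma_m$ -- a ``tent'' that rises on $(m_{n-1},m_n]$ and falls on $(m_n,m_{n+1}]$. Accordingly I would split $V_n f=Q_1+Q_2$ into the parts of degree in $(m_{n-1},m_n]$ and in $(m_n,m_{n+1}]$ and bound $\|Q_1\|_v$ and $\|Q_2\|_v$ separately. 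Lemma~\ref{luskylemma1}, applied to $Q_2$ with $(m,n)=(m_n,m_{n+1})$ and to $Q_1$ with $(m,n)=(m_{n-1},m_n)$, bounds each $\|Q_i\|_v$ by $2A(\cdot,\cdot)$, resp.\ $2B(\cdot,\cdot)$, times the weighted maximum of $Q_i$ on one of the circles $|z|=r_{m_j}$, $j\in\{n-1,n,n+1\}$; thus the problem is reduced to estimating quantities $\sup_{|z|=r_{m_j}}|Q_i(z)|\,v(z)$.

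On such a circle I would bound $|Q_i(z)|\le\sum_k|a_k|\,r_{m_j}^{\,k}$ and then -- instead of the wasteful pointwise bound $|a_k|\le\|f\|_v/\|z^k\|_v$, which loses a factor growing with the width of the block -- use the $\ell_2$-estimate of Proposition~\ref{hull}(i) at a suitably chosen radius, together with the elementary fact that $v(r)r^k$ attains its maximum $\|z^k\|_v$ at $r=r_k$ and that $v(r_{m_j})r_{m_j}^{\,k}/\|z^k\|_v$ equals $A(k,m_j)^{-1}$ when $k\le m_j$ and $B(m_j,k)^{-1}$ when $k\ge m_j$. This turns the weighted maximum of $Q_i$ on the circle into $\|f\|_v$ times the reciprocal of an $A$- or $B$-quantity times a sum of such ratios over the block, and that sum must be controlled well enough to absorb the corresponding prefactor $2A$ or $2B$ furnished by Lemma~\ref{luskylemma1}.

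The main obstacle is obtaining a constant $D$ uniform in $n$. The prefactors $A,B$ delivered by Lemma~\ref{luskylemma1} are at least $b$ and can be arbitrarily large, so the argument stands or falls with showing that the single-circle estimates carry exactly compensating small factors, for every $n$. This is precisely where the hypothesis $\min(A(m_n,m_{n+1}),B(m_n,m_{n+1}))\ge b>2$ is indispensable: the strict inequality $b>2$ is what forces the sums of the ratios $A(k,m_j)^{-1}$, $B(m_j,k)^{-1}$ over the blocks to converge with a bound of the right order -- the $2$ being matched by the factor $2$ in Lemma~\ref{luskylemma1} -- so that multiplication by the large prefactor leaves a constant depending only on $v$ and $b$. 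Carrying this balancing out rigorously, uniformly in $n$, is the technical core of the argument; it is done by Lusky in \cite{L}, Proposition~3.4.
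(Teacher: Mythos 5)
The paper offers no proof of this lemma: it is quoted verbatim from \cite{L}, Proposition 3.4(b), and your proposal also ends by delegating ``the technical core'' to that same reference, so as a self-contained argument it is incomplete by its own admission. That would be forgivable if the surrounding strategy were viable, but the compensation mechanism you rely on provably cannot work. Let $\phi(k)=\log\sup_{r>0}r^kv(r)$; this is a supremum of affine functions of $k$, hence convex with derivative $\log r_k$, so $k\mapsto B(m_n,k)$ is nondecreasing on $[m_n,\infty)$ with $B(m_n,m_n)=1$, and $k\mapsto A(k,m_n)$ is nonincreasing on $(0,m_n]$ with $A(m_n,m_n)=1$. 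Consequently every term of the sum $\sum_{m_n<k\le m_{n+1}}B(m_n,k)^{-1}$ that you want to control is at least $B(m_n,m_{n+1})^{-1}$, so the sum is at least $(m_{n+1}-m_n)\,B(m_n,m_{n+1})^{-1}$; after multiplying back by the prefactor $2B(m_n,m_{n+1})$ from Lemma \ref{luskylemma1} your estimate for $\Vert Q_2\Vert_v$ is at least $2(m_{n+1}-m_n)\Vert f\Vert_v$, and the identical obstruction occurs for $Q_1$ with the $A$'s. Since $m_{n+1}-m_n$ is unbounded for all the weights of interest (it is $\asymp n$ for $v(r)=e^{-ar^p}$ by Proposition \ref{exampleexponential}), no uniform $D$ can come out of this route. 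The Cauchy--Schwarz variant via Proposition \ref{hull}(i) at the radius $r_{m_n}$ merely replaces the loss by $\sqrt{m_{n+1}-m_n}$, and choosing any other radius trades that factor for powers of $A$ or $B$, which under the hypotheses of this lemma are bounded \emph{below} by $b$ but not above, so the situation only worsens.

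The underlying problem is that any estimate which passes through $\sum_k|a_k|\,|z|^k$ on a circle throws away exactly the cancellation that makes $V_n$ bounded; what is needed is an $L^1$-type bound for the de la Vall\'ee-Poussin kernels on the distinguished circles (the convolution structure you mention in your first paragraph but then never use), interwoven with the block comparisons of Lemma \ref{luskylemma1} --- and that interplay is precisely the content of Lusky's Proposition 3.4 that you have black-boxed. Your heuristic for the hypothesis $b>2$ is also misplaced: the $2$ is not there to make sums of reciprocal $A$'s and $B$'s within a single block converge (as shown above, they do not have the required size), but rather to dominate the constant $2$ of Lemma \ref{luskylemma1} in geometric series taken over the scale index $n$, as in the mechanism behind Lemma \ref{luskylemma4}. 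In short, the only load-bearing part of your proposal is the final citation to \cite{L}, which is exactly what the paper itself gives as the proof.
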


\begin{lemma} \label{luskylemma3} (\cite{L}, Lemma 5.1)
Fix $b>1$. For each weight $v: [0, \infty[ \rightarrow ]0, \infty[$ there is a sequence of numbers $0<m_1 <m_2<...$ with $\lim_{n \rightarrow \infty} m_n = \infty$, such that
$$
A(m_n,m_{n+1}) \geq b \ \ \ {\rm and} \ \ \ B(m_n,m_{n+1}) \geq b
$$
and
$$
\sup\limits_{n \in \N} \min(A(m_n,m_{n+1}),B(m_n,m_{n+1})) < \infty.
$$
\end{lemma}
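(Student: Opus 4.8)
The plan is to construct the sequence $(m_n)$ greedily, exploiting two structural facts about the functions $A$ and $B$. First, for fixed $m$ the quantity $A(m,n)$ is non-increasing and tends to $0$ as $n\to\infty$ (since $r_n\to\infty$, the factor $(r_m/r_n)^m v(r_m)/v(r_n)$ decays; here the rapid decrease of $v$ is what forces $v(r_m)/v(r_n)\to 0$ fast enough, together with $r_n\to\infty$). Dually, for fixed $n$ the quantity $B(m,n)$ is non-increasing in $m$ on $(0,n)$ and $B(m,n)\to 1$ as $m\uparrow n$ (continuity of $r\mapsto r^n v(r)$ and of the maximum point), while $B(m,n)$ can be made large by taking $m$ small. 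The key elementary identity to record first is that $A(m,n)\,B(m,n) = (r_n/r_m)^{\,n-m}$, and more usefully that both $A(m,n)\ge 1$ and $B(m,n)\ge 1$ always hold, because $r_m$ is the maximizer of $r^m v(r)$ (so $r_m^m v(r_m)\ge r_n^m v(r_n)$, giving $A(m,n)\ge 1$) and $r_n$ is the maximizer of $r^n v(r)$ (giving $B(m,n)\ge 1$). This monotonicity-to-$1$ behaviour is exactly what lets us control the overshoot.

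Granting these facts, I would build $(m_n)$ inductively. Set $m_1=1$, say. Having chosen $m_n$, I want $m_{n+1}>m_n$ with $A(m_n,m_{n+1})\ge b$ and $B(m_n,m_{n+1})\ge b$, and with neither of them much bigger than $b$. Consider the function $m\mapsto A(m_n,m)$ for $m>m_n$: it starts near $A(m_n,m_n^+)$, which is $\ge 1$, is continuous, and decreases to $0$; simultaneously $m\mapsto B(m_n,m)$ starts near $1$ and increases without bound (for $v$ rapidly decreasing, $B(m_n,m)\to\infty$ as $m\to\infty$). Hence there is a first value $m^{A}$ where $A(m_n,\cdot)$ drops to $b$ and a first value $m^{B}$ where $B(m_n,\cdot)$ reaches $b$; one checks $m^A$ is where the $A$-constraint becomes binding from above as $m$ grows past it, while $m^B$ is where the $B$-constraint becomes binding. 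Take $m_{n+1}:=\min(m^A,m^B)$ if that already gives both inequalities; if not, the correct choice is $m_{n+1}:=\max(m^A,m^B)$ — but then one of $A,B$ has overshot $b$, and I must bound the overshoot. This is where the "values decrease/increase continuously to their limits" observation is used: because $A(m_n,m)$ was exactly $b$ at $m^A$ and only moves continuously afterward, and because the discrete interval $(m_n,m_{n+1}]$ can be taken with endpoints differing by a controlled amount, the value of $A(m_n,m_{n+1})$ (resp. $B$) stays bounded by a constant depending only on $b$ and on a one-step modulus of continuity of the maps $r\mapsto r_m$, $v$. The uniform upper bound on $\min(A(m_n,m_{n+1}),B(m_n,m_{n+1}))$ follows since at every step at least one of the two equals $b$ exactly (it was the binding constraint), so the minimum is at most the overshoot constant, uniformly in $n$.

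To make the overshoot estimate rigorous I would pass through Lusky's own estimates relating consecutive values: there is a standard submultiplicativity, $A(\ell,n)\le A(\ell,m)A(m,n)$ and similarly for $B$, for $\ell<m<n$, coming directly from the definition and the inequality $r_m^m v(r_m)\ge r_\mu^m v(r_\mu)$ for the relevant intermediate index $\mu$. Using this, if I choose $m_{n+1}$ to be the least real number $>m_n$ for which $\max(A(m_n,m_{n+1}),B(m_n,m_{n+1}))\le b^2$ fails to leave room — more precisely, I stop the first time $\min(A,B)\ge b$ — then submultiplicativity against the previous "good" index shows the binding quantity cannot exceed $b^2$ (or some fixed power of $b$), giving the uniform bound $\sup_n\min(A(m_n,m_{n+1}),B(m_n,m_{n+1}))\le b^2<\infty$. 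Finally $\lim m_n=\infty$ is automatic: if the $m_n$ stayed bounded they would cluster, but $A(m_n,m_{n+1})\ge b>1$ forces $r_{m_{n+1}}/r_{m_n}$ bounded away from $1$ in a way incompatible with clustering, since $r_m$ is a strictly increasing continuous function of $m$.

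The main obstacle I anticipate is precisely the overshoot control: guaranteeing that when one constraint forces $m_{n+1}$ past the threshold of the other, the resulting value is bounded uniformly in $n$ rather than drifting to infinity. The clean way around it — and I expect this is how Lusky does it — is the submultiplicative inequality for $A$ and $B$ together with the two monotonicity facts ($A\ge 1$ decreasing to $0$ in $n$; $B\ge 1$ decreasing to $1$ as $m\uparrow n$), which together pin the overshoot to a fixed power of $b$. Everything else (existence of the thresholds, $m_n\to\infty$) is soft and follows from continuity and the rapid-decrease hypothesis on $v$.
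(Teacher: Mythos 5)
The paper offers no proof of this lemma: it is quoted verbatim from \cite{L}, Lemma 5.1, with the added remark that Lusky in fact arranges $\min(A(m_n,m_{n+1}),B(m_n,m_{n+1}))=b$ exactly. Your greedy first-passage strategy is the natural route to such a statement, but the two structural facts you build it on are both wrong, and this makes the sketch internally inconsistent. For fixed $m$ one has $A(m,n)=r_m^m v(r_m)\big/\bigl(r_n^m v(r_n)\bigr)$; since $v$ is rapidly decreasing and $r_n\to\infty$, the denominator $r_n^m v(r_n)$ tends to $0$, so $A(m,n)\to\infty$ as $n\to\infty$, not to $0$ (also $v(r_m)/v(r_n)\to\infty$, not $0$, because $v$ is non-increasing). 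Consequently your picture of $A(m_n,\cdot)$ starting near $1$ and decreasing to $0$ is incompatible with ever achieving $A(m_n,m_{n+1})\ge b>1$: the ``first value $m^A$ where $A(m_n,\cdot)$ drops to $b$'' does not exist in that picture. The ``submultiplicativity'' you invoke for the overshoot bound is likewise reversed: a direct computation gives
\begin{equation*}
\frac{A(\ell,m)\,A(m,n)}{A(\ell,n)}=\Bigl(\frac{r_m}{r_n}\Bigr)^{m-\ell}\le 1
\qquad (\ell<m<n),
\end{equation*}
so $A(\ell,m)A(m,n)\le A(\ell,n)$, the opposite of what your overshoot argument needs; the same holds for $B$.

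The facts that are actually true --- $A(m,n)\ge 1$ and $B(m,n)\ge 1$ always (your argument for this is correct), and both $A(m_n,\cdot)$ and $B(m_n,\cdot)$ tend to $1$ as $m\downarrow m_n$ and to $\infty$ as $m\to\infty$ --- make the construction simpler than your overshoot analysis: define $m_{n+1}$ as the first $m>m_n$ with $\min\bigl(A(m_n,m),B(m_n,m)\bigr)\ge b$; by continuity and the intermediate value theorem the minimum equals $b$ there, which yields $\sup_n\min(A,B)=b<\infty$ with no overshoot to control, and $m_n\to\infty$ follows since both quantities tend to $1$ as $m\downarrow m_n$, so the step cannot be arbitrarily small. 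What genuinely requires care, and what your sketch glosses over, is the continuity in $m$ of $r_m$ (the argmax of $r^m v(r)$ need not be unique or vary continuously for a general continuous weight); handling this is the real technical content of Lusky's proof.
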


In fact, Lusky proves in \cite[Lemma 5.1]{L} that $(m_n)_n$ can be selected to satisfy
$\min(A(m_n,m_{n+1}),B(m_n,m_{n+1}))=b$ for each $n \in \N$.
Our next lemma is contained in  Proposition 5.2 of \cite{L}, and its proof in \cite{L}
only uses the fact that $\min(A(m_n,m_{n+1}),B(m_n,m_{n+1})) \geq b$, not that
$\min(A(m_n,m_{n+1}),$ $B(m_n,m_{n+1}))=b$ for each $n \in \N$.
(In Remark \ref{rem9} we will be  able to clarify, which of these quantities $A$ or $B$
is larger in the case of the weights $\exp(-ar^p)$.)
Accordingly, the following result holds true, and it contains an important expression for a
norm equivalent to that of $H_v^\infty(\C)$.

\begin{lemma} \label{luskylemma4} (\cite{L}, Proposition 5.2)
Let the numbers $(m_n)_{n=1}^\infty$ and $b$ be as in Lemma
\ref{luskylemma3}, and in addition assume that  $b > 2$. Then, for every
$f \in H_v^\infty(\C)$,
\begin{eqnarray}
& & c_1 \sup\limits_{n \in \N} \sup\limits_{r_{m_{n-1}} \leq |z| \leq r_{m_{n+1}}}
|f_n(z)| v(z)
%%\nonumber \\  &\leq &
\leq
\Vert f \Vert_v \leq
c_2 \sup\limits_{n \in \N} \sup\limits_{r_{m_{n-1}} \leq |z| \leq r_{m_{n+1}}}
|f_n(z)| v(z) , \nonumber
\end{eqnarray}
where $f_n := V_n f$.

\end{lemma}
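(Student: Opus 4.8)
The plan is to prove the two inequalities separately. The left-hand one is essentially immediate: since $b>2$, Lemma~\ref{luskylemma2} provides a constant $D$ with $\Vert V_n\Vert\le D$ for all $n$, so $\Vert f_n\Vert_v=\Vert V_nf\Vert_v\le D\Vert f\Vert_v$ and hence $\sup_{r_{m_{n-1}}\le|z|\le r_{m_{n+1}}}|f_n(z)|v(z)\le\Vert f_n\Vert_v\le D\Vert f\Vert_v$, which gives the claim with $c_1=1/D$.

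For the right-hand inequality, write $M:=\sup_n\sup_{r_{m_{n-1}}\le|z|\le r_{m_{n+1}}}|f_n(z)|v(z)$ and start from the decomposition $f=\sum_{n\ge1}f_n$, valid pointwise because each Taylor coefficient of $f$ occurs in exactly two consecutive $f_n$ with weights summing to $1$. Then for any $z$, $|f(z)|v(z)\le\sum_{n\ge1}\big(\max_{|w|=|z|}|f_n(w)|\big)v(|z|)$. I would fix $z$, let $N$ be the index with $r_{m_N}\le|z|\le r_{m_{N+1}}$ (the range $|z|<r_{m_1}$ being entirely analogous with $N$ a suitable small index), and split the series according to where $|z|$ falls relative to the block $[r_{m_{n-1}},r_{m_{n+1}}]$ that supports $f_n$. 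For the at most four indices $n\in\{N-1,N,N+1,N+2\}$ the point $|z|$ lies in this block, or one block to its left or right, and the $n$-th term is $\le M$; the remaining indices form two tails, $n\le N-2$ and $n\ge N+3$.

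The only real obstacle is showing that these tails decay geometrically, uniformly in $N$. The key is that $f_n=V_nf$ is a \emph{polynomial} all of whose monomials have degree in $(m_{n-1},m_{n+1}]$. For $n\le N-2$, so that $|z|\ge r_{m_N}$ lies far to the right of the support block of $f_n$, the elementary maximum--modulus estimate $\max_{|w|=|z|}|f_n(w)|\le(|z|/r_{m_{n+1}})^{[m_{n+1}]}\max_{|w|=r_{m_{n+1}}}|f_n(w)|$ for polynomials of degree $\le[m_{n+1}]$ (proved by applying the maximum principle to the reciprocal polynomial), combined with $\max_{|w|=r_{m_{n+1}}}|f_n(w)|v(r_{m_{n+1}})\le M$, leaves only the scalar quantity $|z|^{[m_{n+1}]}v(|z|)\big/\big(r_{m_{n+1}}^{[m_{n+1}]}v(r_{m_{n+1}})\big)$ to estimate. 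Since $t\mapsto t^{[m_{n+1}]}v(t)$ is decreasing past its maximum $r_{[m_{n+1}]}\le r_{m_N}$ and $[m_{n+1}]\le m_{n+1}$, this ratio is at most $r_{m_N}^{m_{n+1}}v(r_{m_N})\big/\big(r_{m_{n+1}}^{m_{n+1}}v(r_{m_{n+1}})\big)=A(m_{n+1},m_N)^{-1}$; and since $A(m_i,m_{i+1})\ge b$ (Lemma~\ref{luskylemma3}) and $A(m_{n+1},m_N)\ge\prod_{i=n+1}^{N-1}A(m_i,m_{i+1})$ by super-multiplicativity — which is immediate from $r_p\ge r_q$ for $p\ge q$ — one gets $A(m_{n+1},m_N)^{-1}\le b^{-(N-n-1)}$, so the $n$-th term is $\le b^{-(N-n-1)}M$. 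The tail $n\ge N+3$ is handled symmetrically: there $f_n$ is divisible by $z^{[m_{n-1}]+1}$, one compares at the radius $r_{m_{n-1}}$, and the analogous ratio is controlled by $B(m_{N+1},m_{n-1})^{-1}\le b^{-(n-N-2)}$. Summing the two geometric series and adding the at most $4M$ from the central indices yields $|f(z)|v(z)\le\big(4+\tfrac{2}{b-1}\big)M$ for every $z$, i.e.\ the claim with $c_2=4+\tfrac{2}{b-1}$.

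I expect the genuinely delicate points to be only of bookkeeping nature: pinning down the index ranges at the two border blocks $n=N-1,N+2$ and in the exceptional range $|z|<r_{m_1}$ (where the monotonicity of $t\mapsto t^{d}v(t)$ switches sign), and carrying the floor brackets $[\cdot]$ correctly through the degree estimates and through the passage from $A(m_i,m_{i+1})\ge b$, $B(m_i,m_{i+1})\ge b$ to the super-multiplicative lower bounds. None of these require more than elementary monotonicity arguments.
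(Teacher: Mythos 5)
The paper itself gives no proof of this lemma: it is quoted from Lusky \cite{L}, Proposition 5.2, with only the remark that Lusky's argument uses $\min(A,B)\ge b$ rather than the equality $\min(A,B)=b$. So there is nothing in the paper to compare your argument with line by line; what you have written is a genuine reconstruction, and it is essentially correct and in the spirit of Lusky's block argument: uniform boundedness of the $V_n$ (Lemma \ref{luskylemma2}, needing $b>2$) for the easy direction, and, for the hard direction, maximum-modulus growth estimates for polynomials whose Taylor support lies in one block, converted into geometric decay via the supermultiplicativity of $A$ and $B$ along the sequence $(m_n)$ and the lower bound $A(m_i,m_{i+1}),B(m_i,m_{i+1})\ge b>1$. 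The supermultiplicativity step is correct and is exactly what turns the blockwise bound $b$ into the factors $b^{-(N-n-1)}$ and $b^{-(n-N-2)}$.

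One justification is not valid as stated and should be repaired. You assert that $t\mapsto t^{d}v(t)$ \emph{is decreasing past its maximum}. For a general continuous non-increasing weight this function need not be unimodal (it can have several local maxima), so monotonicity on $[r_d,\infty[$ is not available. The inequality you actually need, namely $|z|^{d}v(|z|)\le r_{m_N}^{d}v(r_{m_N})$ for $|z|\ge r_{m_N}$ and $d\le m_N$, is nevertheless true: write $(|z|/r_{m_N})^{d}\,v(|z|)/v(r_{m_N})\le(|z|/r_{m_N})^{m_N}\,v(|z|)/v(r_{m_N})\le 1$, where the first step uses only $|z|/r_{m_N}\ge 1$ and $d\le m_N$, and the second uses only that $r_{m_N}$ is the \emph{global} maximum point of $r^{m_N}v(r)$. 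The symmetric fix (with $r_{m_{N+1}}$ and $d\ge m_{N+1}$) handles the right tail and also the exceptional range $|z|<r_{m_1}$, so no unimodality is ever needed. Two smaller bookkeeping points: (i) with the paper's conventions $\sum_n V_nf$ reproduces $f$ only up to the constant term $a_0$ (the weight attached to $k=0$ in $V_1$ is $0$), so the constant term must be treated separately; (ii) for the border indices $n=N-1$ and $n=N+2$ the point $|z|$ need not lie in the block $[r_{m_{n-1}},r_{m_{n+1}}]$, so the bound by $M$ for these two terms is really the $k=0$ case of your tail estimates rather than a direct appeal to the definition of $M$. Neither point affects the validity of the conclusion.
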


The main result of this section reads as follows.

\begin{theorem}
\label{solidhull}
Let $v : \mathbb{C} \to ]0,\infty[$ be a   weight
(which is continuous,  radial, non-increasing on $[0,\infty[$, and rapidly decreasing).
In addition, assume that there exists a sequence $0<m_1 <m_2<...$ with
$\lim_{n \rightarrow \infty} m_n = \infty$  such that for some $b>2$ and some $K \geq b$ we have
\begin{equation}
b \leq \min(A(m_n,m_{n+1}),B(m_n,m_{n+1})) \leq \max(A(m_n,m_{n+1}),B(m_n,m_{n+1})) \leq K,
\label{35}
\end{equation}
for each $n \in \N$. Then, the solid hull of $H^\infty_v(\C)$ is
\begin{equation}
S(v,\C):= \Big\{ (b_m)_{m=0}^\infty \, : \,
\sup_{n \in \N} v(r_{m_{n}})
\Big( \sum_{m_n < m \leq m_{n+1}} |b_m|^2
 r_{m_n}^{2m} \Big)^{1/2} < \infty \  \Big\} .  \label{37}
\end{equation}
\end{theorem}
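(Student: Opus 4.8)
The plan is to show the two inclusions $S(H_v^\infty(\C)) \subset S(v,\C)$ and $S(v,\C) \subset S(H_v^\infty(\C))$, using Lemma \ref{luskylemma4} as the main engine together with the estimates from Lemma \ref{luskylemma1}. For the inclusion $S(H_v^\infty(\C)) \subset S(v,\C)$, since $S(v,\C)$ is solid it suffices to check that every $f(z) = \sum a_m z^m \in H_v^\infty(\C)$ has coefficient sequence in $S(v,\C)$. Fix $n$ and apply Proposition \ref{hull}(i) to the polynomial $f_n = V_n f$: writing $f_n(z) = \sum_{m_{n-1} < m \leq m_{n+1}} \gamma_m a_m z^m$ with $\gamma_m \in [0,1]$, evaluation at $r = r_{m_n}$ gives
$$
v(r_{m_n}) \Big( \sum_{m_{n-1} < m \leq m_{n+1}} \gamma_m^2 |a_m|^2 r_{m_n}^{2m} \Big)^{1/2} \leq v(r_{m_n}) M(f_n, r_{m_n}) \leq \Vert f_n \Vert_v \leq D \Vert f \Vert_v
$$
by Lemma \ref{luskylemma2} (whose hypothesis follows from \eqref{35}). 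The only issue is that $\gamma_m$ is not bounded below on the whole block $m_n < m \leq m_{n+1}$; but on this sub-block $\gamma_m = ([m_{n+1}]-m)/([m_{n+1}]-[m_n])$, which is $\geq 1/2$ precisely on $m_n < m \leq (m_n + m_{n+1})/2$ roughly, so a single $V_n$ does not control the full block. This is remedied by also using $V_{n+1}$, on which the same indices $m_n < m \leq m_{n+1}$ carry weight $\gamma_m = (m - [m_n])/([m_{n+1}] - [m_n])$; combining the two blocks one gets $\gamma_m^{(n)} + \gamma_m^{(n+1)} \geq $ const on all of $m_n < m \leq m_{n+1}$, hence $v(r_{m_n})(\sum_{m_n < m \leq m_{n+1}} |a_m|^2 r_{m_n}^{2m})^{1/2} \leq C \Vert f \Vert_v$, uniformly in $n$.

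For the reverse inclusion $S(v,\C) \subset S(H_v^\infty(\C))$, since $S(H_v^\infty(\C))$ is solid it is enough to produce, for each $(b_m) \in S(v,\C)$, some $f \in H_v^\infty(\C)$ with $|b_m| \leq |a_m|$ for all $m$; the natural candidate is $f(z) = \sum_m |b_m| z^m$ itself (one must first check it is entire, which follows from the growth condition defining $S(v,\C)$ via the standard root test, since $r_{m_n} \to \infty$). To bound $\Vert f \Vert_v$ I would use the upper estimate in Lemma \ref{luskylemma4}: $\Vert f \Vert_v \leq c_2 \sup_n \sup_{r_{m_{n-1}} \leq |z| \leq r_{m_{n+1}}} |V_n f(z)| v(z)$. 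For $z$ in the annulus $r_{m_{n-1}} \leq |z| \leq r_{m_{n+1}}$, the polynomial $V_n f$ has all its terms of degree between $[m_{n-1}]+1$ and $[m_{n+1}]$, so I would apply Lemma \ref{luskylemma1} with $m = m_{n-1}$, $n' = m_{n+1}$ to reduce the sup over the annulus to the two circles $|z| = r_{m_{n-1}}$ and $|z| = r_{m_{n+1}}$, picking up factors $A(m_{n-1}, m_{n+1})$ and $B(m_{n-1}, m_{n+1})$, which by \eqref{35} (summed across two consecutive steps) are bounded by $K^2$. On either circle, say $|z| = r_{m_{n+1}}$, estimate $|V_n f(z)| v(z) \leq v(r_{m_{n+1}}) \sum_{m_{n-1} < m \leq m_{n+1}} |b_m| r_{m_{n+1}}^m$ and split the sum at $m_n$; on each of the two resulting sub-blocks use Cauchy--Schwarz to pass to the $\ell^2$ norm of $(|b_m|)$, and then compare $r_{m_{n+1}}^m$ with $r_{m_n}^m$ (resp. $r_{m_{n-1}}^m$) using that the ratios $A, B$ of \eqref{35} exactly govern the quotients $v(r_{m_j}) r_{m_j}^m / (v(r_{m_k}) r_{m_k}^m)$ for $m$ in the relevant range. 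The number of sub-blocks and circles is bounded, and each contributes a term of the form $v(r_{m_k})(\sum_{m_k < m \leq m_{k+1}} |b_m|^2 r_{m_k}^{2m})^{1/2}$ times a constant depending only on $K$, so $\Vert f \Vert_v \leq C \sup_n v(r_{m_n})(\sum_{m_n < m \leq m_{n+1}} |b_m|^2 r_{m_n}^{2m})^{1/2} < \infty$.

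The step I expect to be the main obstacle is the bookkeeping in the reverse inclusion: carefully tracking how the quantities $A(m_j, m_k)$, $B(m_j, m_k)$ over non-adjacent indices relate to the single-step quantities (using $A(m_{n-1},m_{n+1}) \leq A(m_{n-1},m_n) A(m_n,m_{n+1})$ type submultiplicativity, or rather the direct definitions), and verifying that comparing $r_{m_{n+1}}^m v(r_{m_{n+1}})$ to $r_{m_n}^m v(r_{m_n})$ for $m_n < m \leq m_{n+1}$ indeed costs only a factor controlled by $K$ rather than something growing with the block length. Concretely, for such $m$ one has $r_{m_n}^m v(r_{m_n}) \geq r_{m_n}^{m_{n+1}} v(r_{m_n})$ in the worst case, and the ratio to $r_{m_{n+1}}^m v(r_{m_{n+1}})$ is at most $B(m_n, m_{n+1})^{\text{(something} \leq 1)} \leq K$; making this monotonicity-in-the-exponent argument precise — exploiting that $r \mapsto r^m v(r)$ is unimodal with peak at $r_m$, so on $[r_{m_n}, r_{m_{n+1}}]$ it behaves predictably for $m$ between $m_n$ and $m_{n+1}$ — is the technical heart. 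Everything else (entirety of $f$, solidity reductions, the de la Vallée-Poussin weight lower bound) is routine given the lemmas already available.
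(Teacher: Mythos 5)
Your first inclusion ($H_v^\infty(\C)\subset S(v,\C)$) is essentially the paper's Step 1: the observation that the de la Vall\'ee-Poussin weights from $V_n$ and $V_{n+1}$ sum to $1$ on the block $m_n<m\leq m_{n+1}$ (the paper splits the block at the midpoint, where each of the two weights is $\geq 1/2$ on its half), combined with the Parseval-type bound at $r=r_{m_n}$ and the uniform boundedness of the $V_n$ from Lemma \ref{luskylemma2}. That part is fine.

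The reverse inclusion, however, has a genuine gap, and it is not the bookkeeping with $A$ and $B$ that you flag as the main obstacle (that part really does only cost powers of $K$). The fatal step is taking $f(z)=\sum_m |b_m| z^m$ and passing from the $\ell^1$-sum $\sum_{m_n<m\leq m_{n+1}}|b_m|r^m$ to the $\ell^2$-sum by Cauchy--Schwarz: this loses a factor $(m_{n+1}-m_n)^{1/2}$, which is unbounded in $n$ (for $v(r)=e^{-ar^p}$ one has $m_n\asymp n^2$, so the block length grows like $n$). No rearrangement of the $A,B$ comparisons can recover this loss. Indeed, the statement you are trying to prove — that $(b_m)\in S(v,\C)$ implies $\sum|b_m|z^m\in H_v^\infty(\C)$ — would force $S(v,\C)$ to coincide with the solid core $s(v,\C)$ of Proposition \ref{core}, which is an $\ell^1$-type condition strictly stronger than the $\ell^2$-block condition \eqref{37}; a sequence with $|b_m|r_{m_n}^m v(r_{m_n})\asymp n^{-1/2}$ on the $n$-th block lies in $S(v,\C)$ but its modulus sequence is not the coefficient sequence of any function in $H_v^\infty(\C)$. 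The missing idea is that the dominating function $f$ must have coefficients with carefully chosen arguments, not all positive. The paper achieves this via Corollary 2.8 of \cite{BST} (a consequence of Kisliakov's theorem): on each block one replaces $(b_m r_{m_n}^m)$ by coefficients $b_m'$ with $|b_m'|\geq |b_m|r_{m_n}^m$ whose associated polynomial has sup norm on the unit circle bounded by an absolute constant times the $\ell^2$ norm $\big(\sum |b_m|^2 r_{m_n}^{2m}\big)^{1/2}$ — exactly the reverse of the trivial inequality, and impossible with nonnegative coefficients. After that, the blocks $Q_n$ are glued together using Lemma \ref{luskylemma1} and the norm equivalence of Lemma \ref{luskylemma4}, much as you envisage; but without the Kisliakov step the construction cannot start.
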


\begin{remark} Notice that if a weight $v$, as in Theorem \ref{solidhull}, satisfies \eqref{35}, then
the conclusion of Lemma \ref{luskylemma4} also holds.  In fact,
many important weights do satisfy %the assumption
\eqref{35}, see Corollary \ref{corex} and Remark \ref{remLB}. Moreover, examples
with explicit calculations of the sequence $(m_n)_n$ will be presented in
Section 3.

Of course, there is no need to extend the condition \eqref{37} to the finitely many coefficients
$b_m$, $0 \leq m \leq m_1$.

In the characterization \eqref{37} we could as well replace
$r_{m_n}$  by $r_{m_{n+1}}$. This follows from the proof below, or as well from
the the right hand side inequality in the assumption \eqref{35} together with the definition \eqref{28}.
Namely, the condition \eqref{35} implies
\begin{equation}
\frac{1}{K} \leq \Big( \frac{r_{m_{n+1}}}{r_{m_n}} \Big)^{m_n}
\frac{v (r_{m_{n+1}})}{v(r_{m_n})}
\leq \Big( \frac{r_{m_{n+1}}}{r_{m_n}} \Big)^{m_{n+1}}
\frac{v (r_{m_{n+1}})}{v(r_{m_n})} \leq K.
\end{equation}
Hence, for every $m$ with $m_n \leq m \leq m_{n+1}$ we have
\begin{equation}
\frac{1}{K} \leq \Big( \frac{r_{m_{n+1}}}{r_{m_n}} \Big)^{m}
\frac{v (r_{m_{n+1}})}{v(r_{m_n})}  \leq K,
\end{equation}
or, for all $m_n \leq m \leq m_{n+1}$,
\begin{equation}
\frac{1}{K} r_{m_n}^{m} v(r_{m_n}) \leq r_{m_{n+1}}^{m} v (r_{m_{n+1}})
  \leq K r_{m_n}^{m} v(r_{m_n}).
\end{equation}
This means that $ r_{m_n}^{m} v(r_{m_n})$ may be replaced by
$ r_{m_{n+1}}^{m} v (r_{m_{n+1}})$ in \eqref{37}.
\end{remark}

\begin{proof}
The proof will be obtained in two steps.

\noindent
\textbf{Step 1.} \textit{If $g(z) = \sum_{m=0}^\infty b_m z^m\in H_v^\infty(\C)$, and $\Vert g\Vert_v \leq 1$, then there is $C>0$ such that
$$
\Big( \sum_{m_n < m \leq  m_{n+1}} |b_m|^2
 r_{m_n}^{2m} \Big)^{1/2}
\leq  \frac{C}{v(r_{m_{n}})}
$$
for all $ n \in\N$.}

This step is the analogue in our setting of Theorem 1.8 in \cite{BST}.

We estimate first the sum from $m_n$ to $(m_n + m_{n+1})/2$. To do this, observe that if
$m_n \leq m \leq (m_n + m_{n+1})/2$, then $\gamma_m \geq 1/2$.
We have
\begin{eqnarray}
& & \Big( \sum_{m=[m_n]+1}^{[(m_n + m_{n+1})/2]} |b_m|^2  r_{m_n}^{2m}  \Big)^{1/2}
\leq
2 \Big( \sum_{m=[m_n]+1}^{[(m_n + m_{n+1})/2]}
\gamma_m^2 |b_m|^2  r_{m_n}^{2m}  \Big)^{1/2}
\nonumber \\
& \leq &
2 \Big( \sum_{m= [m_{n-1}] +1}^{[m_{n+1}]} \gamma_m^2 |b_m|^2  r_{m_n}^{2m}  \Big)^{1/2}
= 2 \Big( \int\limits_{\partial \D}
  \Big| \sum_{ m_{n-1} < m \leq  m_{n+1} } \gamma_m b_m  r_{m_n}^{m} z^m \Big|^2 dz
 \Big)^{1/2}
\nonumber \\
& \leq &
 4 \pi  \sup\limits_{|z| = 1}
\Big| \sum_{m_{n-1} < m \leq  m_{n+1} } \gamma_m b_m  r_{m_n}^{m} z^m \Big|
= 4 \pi  \sup\limits_{|z| = r_{m_n} }
\Big| \sum_{m_{n-1} < m \leq  m_{n+1} } \gamma_m b_m  r_{m_n}^{m}
\Big( \frac{z}{ r_{m_n}}  \Big)^m  \Big|
\nonumber \\
& = &
\frac{4 \pi}{v(r_{m_{n}})}  v(r_{m_{n}}) \sup\limits_{|z| = r_{m_n} }
\Big| \sum_{m_{n-1} < m \leq  m_{n+1}} \gamma_m b_m  z^m  \Big|
\nonumber \\
& = &   \frac{4 \pi}{v(r_{m_{n}})}
v(r_{m_{n}}) \sup\limits_{|z| = r_{m_n} }
\Big| V_n g (z)   \Big|
\leq   \frac{4 \pi \Vert V_n g \Vert_v }{v(r_{m_{n}})}
\leq \frac{4 \pi D}{v(r_{m_{n}})} ,
\end{eqnarray}
since the operators $V_n$ are uniformly bounded with respect to $\Vert \cdot  \Vert_v$ by $D>0$, by Lemma \ref{luskylemma2} and \eqref{35}.

Now we estimate the sum from $(m_n + m_{n+1})/2 $ to $m_{n+1}$.
Observe that
$$
V_{n+1}g(z)= \sum_{m_{n} < m \leq m_{n+2}} \tilde\gamma_m b_m z^m
= \sum_{m = [m_{n}] + 1 }^{[ m_{n+2}]} \tilde\gamma_m b_m z^m,
$$
where the numbers $\tilde\gamma_m \in [0,1]$  are
$$
\tilde\gamma_m = \frac{m-[m_{n}]}{[m_{n+1}]- [m_{n}]}, \ \ m_{n} < m \leq m_{n+1},
$$
which increase from $\tilde\gamma_{m_n+1}$ till $\tilde\gamma_{m_{n+1}}=1$. If $(m_n + m_{n+1})/2 < m \leq
m_{n+1}$, we have $\tilde\gamma_m \geq 1/2$. Thus, proceeding similarly as we did before, we get
\begin{eqnarray}
& & \Big( \sum_{m=[(m_n + m_{n+1})/2]+1}^{[m_{n+1}]} |b_m|^2  r_{m_n}^{2m}  \Big)^{1/2}
\leq
2 \Big( \sum_{m=[(m_n + m_{n+1})/2]+1}^{[m_{n+1}]}
 \tilde\gamma_m^2 |b_m|^2  r_{m_n}^{2m}  \Big)^{1/2}
\nonumber \\
& \leq &
2 \Big( \sum_{m=[m_{n}]+1}^{[m_{n+2}]} \tilde\gamma_m^2 |b_m|^2  r_{m_n}^{2m}  \Big)^{1/2}
\nonumber \\
& \leq&
\frac{4 \pi}{v(r_{m_{n}})}
v(r_{m_{n}}) \sup\limits_{|z| = r_{m_n} }
\Big| V_{n+1} g (z)   \Big|
\leq   \frac{4 \pi \Vert V_{n+1} g \Vert_v }{v(r_{m_{n}})}
\leq \frac{4 \pi D}{v(r_{m_{n}})}.
\end{eqnarray}
This completes the proof of Step 1. \\

Observe that the estimates proved in Step 1 remain valid if we replace $r_{m_n}$
by any  $r_m$ with $m_n \leq m \leq m_{n+1}$. \\

\noindent
\textbf{Step 2.} \textit{For each $(b_m)_{m=0}^\infty \in S(v,\C)$ there is  $f(z)= \sum_{m=0}^\infty a_m z^m  \in H_v^\infty(\C)$ such that $|b_m| \leq |a_m|$ for each $m=0,1,2,...$.}

Fix $C_0 > \sup_{n} v(r_{m_{n}})
\Big( \sum_{m_n < m \leq m_{n+1}} |b_m|^2
 r_{m_n}^{2m} \Big)^{1/2}$.

For all $n  \in\N$ and
$m_n <  m \leq m_{n+1}$,  we apply \cite{BST}, Corollary 2.8  (which is a consequence of a deep result of Kisliakov \cite{Ki}), to the sequence
$$
\big( b_{m  }  r_{m_n}^{m} \big)_{m_n < m \leq m_{n+1}  },
$$
to choose a polynomial
$$
P_n (z) = \sum_{m_n < m \leq m_{n+1} } b_m' z^m
$$
such that
$$
|b'_m| \geq |b_m|  r_{m_n}^{m} \ \forall m \  \ \ \  \mbox{and}
$$
\begin{equation}
\sup\limits_{|z| < 1 } | P_n (z)| = \sup\limits_{|z| = 1 } | P_n (z)|\leq B
\Big( \sum_{m_n < m \leq m_{n+1} } |b_m |^2  r_{m_n}^{2m} \Big)^{1/2}  .
\label{45}
\end{equation}
Here  $B>0$ is an absolute constant. Define
$$
Q_n (z) := \sum_{m_n < m \leq m_{n+1} }  r_{m_n}^{- m}  b_m' z^m
$$
and
$$
g(z) =  \sum_{n=1}^\infty  Q_n(z) =  \sum_{n=1}^\infty
\sum_{m_n < m \leq m_{n+1} } r_{m_n}^{-m}  b_m' z^m  .
%%=  \sum_{m=0}^\infty  b_m' z^m
$$
We still have to show that $g$ is a well defined entire function and that $g \in H_v^\infty(\C)$. However, observe that if $a_m$ denotes the $m$-th Taylor coefficient of $g$, then $|a_m| \geq |b_m|$ for all $m$.

\medskip

By Lemma \ref{luskylemma1}, \eqref{35} and \eqref{45}, we have
\begin{eqnarray}
& &
\Vert Q_n \Vert_v %%\sup\limits_{z \in \C  }  |  Q_n  (z) | v(z)
\leq 2K \sup\limits_{|z| = r_{m_n} } |  Q_n  (z) | v(z)
=  2K  \sup\limits_{|z| = r_{m_n} } \Big| \sum_{m_n < m \leq m_{n+1}}
r_{m_n}^{- m} b_m' z^m  \Big|
v(z)
\nonumber \\
&=&
2K \sup\limits_{|z| = r_{m_n} }  \Big| \sum_{m_n < m \leq m_{n+1} } b_m'
( z / r_{m_n})^m  \Big|  v(z)
=  2K v ( r_{m_n} ) \sup\limits_{|z| = 1}
\Big| \sum_{m_n < m \leq m_{n+1} } b_m'   z^m  \Big|
\nonumber \\
&=&
2K v ( r_{m_n} ) \sup\limits_{|z| < 1}
|P_n (z)|
\nonumber \\
&\leq &
2KB v( r_{m_n}) \Big( \sum_{m_n < m \leq m_{n+1}} r_{m_n}^{2m} |b_m|^2 \Big)^{1/2}
 \leq 2KBC_0, \nonumber
\end{eqnarray}
for each $n \in \N$.
Moreover, for all  $n \geq 2$, we have
\begin{eqnarray}
& &
\sup\limits_{r_{m_{n-1}} \leq |z| \leq r_{m_{n+1}}} |V_n  (Q_n + Q_{n-1} ) (z)|v(z)
\leq
\sup\limits_{z \in \C} |V_n  (Q_n + Q_{n-1} ) (z)|v(z)
\nonumber \\
&\leq &
D \sup\limits_{z \in \C}  |( Q_n + Q_{n-1})  (z)| v(z)
\leq D \Vert Q_n \Vert_v +  D \Vert Q_{n-1} \Vert_v \leq 4KBC_0D.
\label{47a}
\end{eqnarray}
Consequently for each $N >n \geq 2$, we have $V_n(\sum_{j=2}^N Q_j)(z)= V_n (Q_n + Q_{n-1})(z), z \in \C$.
By  Lemma  \ref{luskylemma4}     and \eqref{47a}
%%\cite[Proposition 5.2]{L}, that can be applied since $b>2$,
we get for every  $N \in \N$,
\begin{eqnarray}
& &
\Vert \sum_{j=2}^N Q_j \Vert_v = \sup\limits_{z \in \C} v(z)
\Big|\sum_{j=2}^N Q_j(z) \Big|
\nonumber \\
&\leq &
c_2 \sup\limits_{n \in \N} \sup\limits_{r_{m_{n-1}} \leq  \atop
|z| \leq r_{m_{n+1}}}
%\sup_\theta
\Big|V_n \Big( \sum_{j=2}^N Q_j(z) \Big) \Big|v(z)
\nonumber \\
&= &
c_2 \sup\limits_{n \in \N}
 \sup\limits_{r_{m_{n-1}} \leq \atop |z| \leq r_{m_{n+1}}}
|V_n  (Q_n + Q_{n-1} ) (z)|v(z)
\leq
4KBC_0 c_2 D.
\end{eqnarray}
This implies that the sequence of polynomials $\left( \sum_{j=2}^N Q_j\right)_N$ is
contained in a multiple of the unit ball of $H_v^\infty(\C)$, which is compact for the
compact open topology. Accordingly, there is a subsequence $\left( \sum_{j=2}^{N(s)}
Q_j\right)_s$ converging to $h \in H_v^\infty(\C)$ for this topology. Since the operator of
$k$-th differentiation is continuous for the compact open topology, it follows that the
Taylor coefficients of $h$ and $g-Q_1$ coincide. This implies that $g$ is an entire function
and that $g \in H_v^\infty(\C)$. The proof of Step 2 is now complete by taking the function $f(z):=g(z)+\sum_{0 \leq m \leq m_1} b_m z^m$.
\end{proof}

\begin{remark}
\label{remLB}
(1) Lusky introduces the following \textit{condition (B)} on the weight $v$ in \cite{L}:
$$
\forall b_1>0 \ \exists b_2 >1 \ \exists c>0 \ \forall m,n :
$$
$$
\left(\frac{r_m}{r_n}\right)^m \frac{v(r_m)}{v(r_n)} \leq b_1 \ \ {\rm and} \ \ |m-n| \geq c \Rightarrow \left(\frac{r_n}{r_m}\right)^n \frac{v(r_n)}{v(r_m)} \leq b_2.
$$
By \cite{L}, Theorem 1.1, if $v$ has condition (B), then $H_v^\infty(\C)$ is isomorphic to $\ell_\infty$, and if $v$ does not satisfy condition (B), then $H_v^\infty(\C)$ is isomorphic to $H^\infty$.

We show that if the weight $v$ satisfies condition (B), then for each $b>2$ one can find a sequence $(m_n)_n$ satisfying the assumption (\ref{35}) in Theorem \ref{solidhull}. Indeed, let $v$ be a weight satisfying condition (B). Given $b>2$ we apply Lemma \ref{luskylemma3} to find a sequence $0<m_1 <m_2<...$ with $\lim_{n \rightarrow \infty} m_n = \infty$, such that
$A(m_n,m_{n+1}) \geq b, B(m_n,m_{n+1}) \geq b$ and $M:= \sup_n \min(A(m_n,m_{n+1}),B(m_n,m_{n+1})) < \infty$. An inspection of the proof of Lemma 5.1 in Lusky shows that we can take in our Lemma \ref{luskylemma3} the sequence $(m_n)_n$ such that $\lim_{n \rightarrow \infty} (m_{n+1}-m_n) = \infty$. Set $b_1 := M$, and select $b_2>1$ and $c>0$ according to condition (B). There is $n(0) \in \N$ such that $m_{n+1}-m_n \geq c$ if $n \geq n(0)$. Condition (B) now implies that $\max(A(m_n,m_{n+1}),B(m_n,m_{n+1})) \leq \max(M,b_2)$ for each $n \geq n(0)$. The proof is complete if we take $(m_n)_{n \geq n(0)}$. \\

(2) Lusky constructs in \cite{L}, Example 2.6, a weight $v$ on $\C$ not satisfying condition (B) such that for a certain sequence $(m_n)$ with $m_{n+1}-m_n = n+1$, $A(m_n,m_{n+1})=(n+1)^{n+1}$ and $B(m_n,m_{n+1})=1$ for each $n \in \N$.
\end{remark}

This remark and Examples 2.1--2.2 in \cite{L} imply the following result.

\begin{corollary}
\label{corex}
Condition \eqref{35} is satisfied by the following weights:

\noindent  $v(r)= \exp(-r^p)$ with  $p > 0$,
$v(r)= \exp( -\exp r)$,  and
$v(r)= \exp\big(- (\log^+ r  )^p\big) $, where $p \geq 2$ and $\log^+ r
= \max(\log r,0)  $.
\end{corollary}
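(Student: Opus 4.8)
The plan is to verify condition \eqref{35} for each of the three listed weights by invoking Remark \ref{remLB}(1) together with the concrete computations Lusky carried out in Examples 2.1--2.2 of \cite{L}. The key observation is that \eqref{35} is, by Remark \ref{remLB}(1), implied by condition (B); so it suffices to check that each of the three weights satisfies condition (B). Since $r_m$ is defined as the global maximum point of $r^m v(r)$, the first step in every case is to compute $r_m$ explicitly (or at least its asymptotic behaviour) by differentiating $\log(r^m v(r)) = m \log r + \log v(r)$ and setting the derivative to zero, i.e.\ solving $m/r = -v'(r)/v(r)$. Then one must estimate the quantities $(r_m/r_n)^m v(r_m)/v(r_n)$ and $(r_n/r_m)^n v(r_n)/v(r_m)$ and show the implication required by condition (B).

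First I would treat $v(r)=\exp(-r^p)$, $p>0$. Here the stationary equation gives $r_m = (m/p)^{1/p}$, which was already recorded in Section \ref{solid}. One then computes $\log A(m,n) = m\log(r_m/r_n) + (r_n^p - r_m^p) = (m/p)\log(m/n) + (n-m)/p$ and similarly $\log B(m,n) = (n/p)\log(n/m) + (m-n)/p$. Writing $t=n/m$ one gets $\log A = (m/p)(t - 1 - \log t)$ and $\log B = (m/p)\,t\,(1 - 1/t + \log(1/t))$; using the elementary inequality $x-1-\log x \ge 0$ with equality only at $x=1$, both are nonnegative, and the bound $\log A \le b_1$ forces $|m-n|$ to be comparable to a bounded quantity relative to $m$, from which condition (B) follows by the standard estimates. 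This is Example 2.1 of \cite{L}.

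Next, for $v(r)=\exp(-e^r)$ the stationary equation is $m/r = e^r$, so $r_m = \log m - \log r_m \sim \log m$; and for $v(r)=\exp(-(\log^+ r)^p)$ with $p\ge 2$, writing $s=\log r$ for large $r$ the function to maximize has $\log$ equal to $ms - s^p$, giving $r_m = \exp((m/p)^{1/(p-1)})$. In both cases one substitutes into the formulas \eqref{28} and reduces, exactly as in Examples 2.1--2.2 of \cite{L}, to showing that $\log A(m,n)$ bounded above plus $|m-n|$ large forces $\log B(m,n)$ bounded above; this is where the hypothesis $p \ge 2$ enters in the last example (for $1 \le p < 2$ the weight fails condition (B), which is consistent with the known isomorphic classification). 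Finally, I invoke Remark \ref{remLB}(1): each of these weights satisfies condition (B), hence for every $b>2$ admits a sequence $(m_n)_n$ with \eqref{35}, and the proof is complete.

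The main obstacle is the asymptotic analysis of $r_m$ and of $\log A(m,n)$, $\log B(m,n)$ for the second and third weights: unlike the power weight, $r_m$ is not available in closed form and one must carry the logarithmic corrections carefully through the two-variable estimates to confirm the implication in condition (B). However, since Lusky already performed these computations in \cite[Examples 2.1--2.2]{L}, the honest content of the proof is simply to quote those examples and chain them with Remark \ref{remLB}(1); no genuinely new estimate is needed here.
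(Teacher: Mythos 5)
Your proposal is correct and follows essentially the same route as the paper: the paper's proof of Corollary \ref{corex} consists precisely of chaining Remark \ref{remLB}(1) (condition (B) implies \eqref{35}) with Lusky's Examples 2.1--2.2, which verify condition (B) for these three weights. The additional asymptotic computations of $r_m$ you sketch are accurate but, as you yourself note, not needed beyond citing Lusky.
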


\section{Examples.}

In this section we calculate the sequences $(m_n)_{n=1}^\infty$ for the weights
$v(r) = \exp ( -ar^p) $ and thus obtain satisfactory representations of the
corresponding solid hulls.

\begin{theorem}
\label{solidhull1}
Let $v $ be the weight $v(r) = \exp ( -ar^p) $ on $\mathbb{C}$, where $a> 0$ and $p>0$ are
constants. Then,  the solid hull of $H^\infty_v(\C)$ is
\begin{equation}
\Big\{ (b_m)_{m=0}^\infty \, : \,
\sup_{n \in \mathbb{N}}
 \sum_{pn^2 + 1 < m \leq p(n+1)^2} |b_m|^2
e^{ -2 n^2} n^{4m/p} (ap)^{-m/p}  < \infty \  \Big\} .  \label{55}
\end{equation}
\end{theorem}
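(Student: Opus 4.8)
The plan is to obtain the statement directly from Theorem \ref{solidhull}: it suffices to exhibit, for the weight $v(r)=\exp(-ar^p)$, an explicit admissible sequence $(m_n)$ and then to unwind the general characterization \eqref{37}. Reading off the summation ranges in \eqref{55}, the natural candidate is $m_n:=p\,n^2$, $n\in\N$, which is strictly increasing with $m_n\to\infty$; the precise integer parts at the endpoints of the intervals and the finitely many smallest indices are irrelevant, by the Remark following Theorem \ref{solidhull}. For this weight $r_m=(m/ap)^{1/p}$, so I would first record $r_{m_n}=(n^2/a)^{1/p}$, whence $a\,r_{m_n}^p=n^2$ and $v(r_{m_n})=e^{-n^2}$.

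Next I would compute the quantities in \eqref{28} for this sequence. Since $ap$ cancels in the ratio of two consecutive radii and $a\,r_m^p=m/p$, substituting and simplifying gives
\[
A(m_n,m_{n+1})=\Big(\tfrac{n}{n+1}\Big)^{2n^2}e^{2n+1},\qquad
B(m_n,m_{n+1})=\Big(\tfrac{n+1}{n}\Big)^{2(n+1)^2}e^{-(2n+1)} .
\]
The main step --- and essentially the only non-routine one --- is to show that both of these stay in a fixed interval $[b,K]$ with $b>2$, so that hypothesis \eqref{35} is verified. For this I would use only the elementary two-sided estimates $e^{-x/(1-x)}\le 1-x\le e^{-x}$ for $0\le x<1$ and $e^{x/(1+x)}\le 1+x\le e^{x}$ for $x\ge 0$. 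Taking $x=1/(n+1)$ in the first pair yields $e^{-2n}\le (n/(n+1))^{2n^2}\le e^{-2n+2}$, and taking $x=1/n$ in the second yields $e^{2n+2}\le ((n+1)/n)^{2(n+1)^2}\le e^{2n+6}$ (the upper bound using $n\ge 1$). Multiplying by the exponential prefactors, $e\le A(m_n,m_{n+1})\le e^3$ and $e\le B(m_n,m_{n+1})\le e^5$ for every $n\in\N$. Since $e>2$, condition \eqref{35} holds with $b=e$ and $K=e^5$, and Theorem \ref{solidhull} applies: the solid hull of $H^\infty_v(\C)$ is the set $S(v,\C)$ of \eqref{37} associated with this sequence $(m_n)$.

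It then only remains to rewrite \eqref{37} explicitly. Inserting $v(r_{m_n})=e^{-n^2}$ and $r_{m_n}=(n^2/a)^{1/p}$, so that $r_{m_n}^{2m}=(n^2/a)^{2m/p}$, into the condition $\sup_n v(r_{m_n})\big(\sum_{m_n<m\le m_{n+1}}|b_m|^2 r_{m_n}^{2m}\big)^{1/2}<\infty$ and squaring the supremum turns the membership condition into precisely the one defining the set in \eqref{55}. Thus the whole argument reduces to the choice $m_n=pn^2$ and the elementary estimates for $A(m_n,m_{n+1})$ and $B(m_n,m_{n+1})$ above; everything else has already been done in Section \ref{solid}.
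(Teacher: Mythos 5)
Your proposal is correct and follows essentially the same route as the paper: the paper derives Theorem \ref{solidhull1} from Theorem \ref{solidhull} together with Proposition \ref{exampleexponential}, which verifies \eqref{35} for $m_n=p(\log b)n^2$ (hence $m_n=pn^2$ for $b=e$) by the same kind of elementary logarithm estimates (Lemmas \ref{log1}--\ref{estimates}); your two-sided bounds $e\le A(m_n,m_{n+1})\le e^3$ and $e\le B(m_n,m_{n+1})\le e^5$ are a marginally cleaner variant that holds for all $n\ge 1$ instead of $n\ge 4$. One small point: your (correct) computation gives $r_{m_n}^{2m}=(n^2/a)^{2m/p}=n^{4m/p}a^{-2m/p}$, which does not literally coincide with the factor $(ap)^{-m/p}$ printed in \eqref{55} unless $a=p=1$; this appears to be a typo in the statement of the theorem rather than a gap in your argument, so you should not claim the match is \lqq precise\rqq\ without noting the correction.
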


In particular, the solid hull for $v(r)= \exp (-r) $ is
$$
\Big\{ (b_m)_{m=0}^\infty \, : \,
\sup_{n \in \mathbb{N}}
\sum_{m=n^2 + 1 }^{(n+1)^2} |b_m|^2
e^{-2n^2} n^{4m}  %%\Big)^{1/2}
< \infty \  \Big\} .
$$

Theorem \ref{solidhull1} is an immediate consequence of Theorem \ref{solidhull}
and the following proposition, where we choose $b = e$. The proposition gives
the Lusky numbers  $(m_n)_n$ for a class of important weights.

\begin{proposition}\label{exampleexponential}
Let $v(r)= \exp(-a r^p)$, $a>0$, $p>0$ and let $b>2$. The sequence $m_n:= p (\log b) n^2$,
$ n \in \N$, satisfies, for each $n \in \N$, $n\geq 4$,
$$
b \leq A(m_n,m_{n+1})= \left(\frac{r_{m_n}}{r_{m_{n+1}}}\right)^{m_n} \frac{v(r_{m_n})}{v(r_{m_{n+1}})} \leq b^{9/2}
$$
and
$$
b \leq B(m_n,m_{n+1})= \left(\frac{r_{m_{n+1}}}{r_{m_n}}\right)^{m_{n+1}} \frac{v(r_{m_{n+1}})}{v(r_{m_n})} \leq b^{4}.
$$
\end{proposition}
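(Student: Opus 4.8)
The plan is to evaluate $r_{m_n}$, $A(m_n,m_{n+1})$ and $B(m_n,m_{n+1})$ in closed form for $v(r)=\exp(-ar^p)$ and then reduce the four inequalities to elementary two‑sided estimates for $\log(1+\tfrac1n)$.

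First, for this weight $r^m v(r)=r^m\exp(-ar^p)$ attains its maximum at $r_m=(m/(ap))^{1/p}$, as already recorded above. With $m_n=p(\log b)n^2$ and the abbreviation $c:=\log b$ (so $c>\log 2>0$) one gets $r_{m_n}^p=cn^2/a$, hence the two facts that make the computation transparent,
\[
v(r_{m_n})=e^{-cn^2},\qquad \frac{r_{m_n}}{r_{m_{n+1}}}=\left(\frac{n}{n+1}\right)^{2/p}.
\]
Substituting these into \eqref{28}, all occurrences of $a$ and $p$ cancel, and one obtains
\[
A(m_n,m_{n+1})=\left(\frac{n}{n+1}\right)^{2cn^2}e^{c(2n+1)},\qquad
B(m_n,m_{n+1})=\left(\frac{n+1}{n}\right)^{2c(n+1)^2}e^{-c(2n+1)};
\]
taking logarithms,
\[
\log A(m_n,m_{n+1})=c\left[(2n+1)-2n^2\log\left(1+\tfrac1n\right)\right],\qquad
\log B(m_n,m_{n+1})=c\left[2(n+1)^2\log\left(1+\tfrac1n\right)-(2n+1)\right].
\]
Since $c>0$, the claims $b\le A(m_n,m_{n+1})\le b^{9/2}$ and $b\le B(m_n,m_{n+1})\le b^{4}$ are equivalent to the numerical inequalities $1\le (2n+1)-2n^2\log(1+\tfrac1n)\le\tfrac92$ and $1\le 2(n+1)^2\log(1+\tfrac1n)-(2n+1)\le 4$.

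I would settle these with the single elementary sandwich $x-\tfrac{x^2}{2}<\log(1+x)<x$, valid for $x>0$ (each half proved in one line by differentiation), applied with $x=\tfrac1n$. Using $\log(1+\tfrac1n)<\tfrac1n$ one gets $2n^2\log(1+\tfrac1n)<2n$, hence $(2n+1)-2n^2\log(1+\tfrac1n)>1$, and $2(n+1)^2\log(1+\tfrac1n)<2n+4+\tfrac2n$, hence $2(n+1)^2\log(1+\tfrac1n)-(2n+1)<3+\tfrac2n\le 4$ for $n\ge4$. Using $\log(1+\tfrac1n)>\tfrac1n-\tfrac1{2n^2}$ one gets $2n^2\log(1+\tfrac1n)>2n-1$, hence $(2n+1)-2n^2\log(1+\tfrac1n)<2\le\tfrac92$, and $2(n+1)^2\log(1+\tfrac1n)>2n+3-\tfrac1{n^2}$, hence $2(n+1)^2\log(1+\tfrac1n)-(2n+1)>2-\tfrac1{n^2}\ge1$. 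The hypothesis $n\ge4$ is used only in the single borderline estimate $3+\tfrac2n\le4$, to absorb the slack of the crudest bound for small $n$.

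There is no real obstacle here beyond careful bookkeeping: the computation of $A$ and $B$ is mechanical, and the only point needing a moment's care is choosing, for each of the four inequalities, which half of the sandwich $x-\tfrac{x^2}{2}<\log(1+x)<x$ to use so that the resulting constant falls into $[1,\tfrac92]$, respectively $[1,4]$. The exponents $9/2$ and $4$ in the statement are generous; in fact the above already yields $A(m_n,m_{n+1})<b^2$ for every $n\ge1$, and retaining one more term, $\log(1+x)<x-\tfrac{x^2}{2}+\tfrac{x^3}{3}$, gives $B(m_n,m_{n+1})<b^{11/3}$ for every $n\ge1$, which would even remove the restriction $n\ge4$.
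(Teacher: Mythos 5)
Your proposal is correct and follows essentially the same route as the paper: compute $r_{m_n}$, $v(r_{m_n})$ and hence $A(m_n,m_{n+1})$, $B(m_n,m_{n+1})$ explicitly, then reduce the four bounds to second-order Taylor estimates for the logarithm (the paper packages these as Lemmas \ref{log1}--\ref{estimates} for general $m<M<2m$ and then substitutes $m=\alpha n^2$, whereas you substitute first and use the single sandwich $x-\tfrac{x^2}{2}<\log(1+x)<x$, which is a purely organizational difference). All your numerical estimates check out, including the sharper constants $b^2$ for $A$ and the observation that only the bound $3+\tfrac2n\le 4$ uses $n\ge 4$ (indeed $n\ge 2$ suffices there).
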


Proposition \ref{exampleexponential} implies that the numbers $m_n$
can chosen to be $n^2$ for the weight
$v(r) = \exp(-r)$ and $2 n^2$ for the weight $v(r) = \exp(-r^2)$.
We give the proof of this proposition in several steps.

\begin{lemma}\label{log1}
If $0<m<M$ satisfies $M<2m$, then
$$
\exp\left(\frac{1}{2}\frac{(M-m)^2}{M}\right) \leq \left(\frac{M}{m}\right)^M e^{m-M} \leq
\exp\left(\frac{(M-m)^2}{M}\right).
$$

\end{lemma}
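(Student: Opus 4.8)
The plan is to reduce the double inequality to a single-variable statement by taking logarithms and introducing the ratio $t := M/m$. The hypotheses $0<m<M<2m$ translate exactly into $t \in (1,2)$. The logarithm of the central quantity is $M\log(M/m)-(M-m) = m\,(t\log t - t + 1)$, while, using $(M-m)^2/M = m(t-1)^2/t$, the two outer quantities are $\tfrac12\, m(t-1)^2/t$ and $m(t-1)^2/t$. Dividing through by the positive number $m$, the claim becomes equivalent to
$$
\frac{(t-1)^2}{2t} \;\leq\; t\log t - t + 1 \;\leq\; \frac{(t-1)^2}{t}, \qquad 1 < t < 2 .
$$
I would prove each inequality by the standard device of checking that the relevant difference and its first derivative both vanish at $t=1$, using $\phi(t):=t\log t - t + 1$ with $\phi'(t)=\log t$ and $\phi''(t)=1/t$.

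For the lower bound I set $F(t):=\phi(t)-(t-1)^2/(2t)$. One checks $F(1)=0$ and $F'(1)=0$, and then $F''(t)=1/t-1/t^3=(t^2-1)/t^3>0$ on $(1,2)$. Hence $F'$ is strictly increasing from $F'(1)=0$, so $F'>0$ on $(1,2)$, so $F$ is strictly increasing from $F(1)=0$, giving $F(t)>0$. This is the lower inequality; equality only occurs in the excluded limit $t\to 1$ (that is, $M=m$).

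For the upper bound I set $G(t):=(t-1)^2/t-\phi(t)$, again with $G(1)=G'(1)=0$. Here the clean convexity argument breaks down, and this is the main obstacle: $G''(t)=2/t^3-1/t=(2-t^2)/t^3$ changes sign at $t=\sqrt2$, so $G$ is not convex on all of $(1,2)$. Instead I would argue through the shape of $G'(t)=1-1/t^2-\log t$: by the sign of $G''$, the function $G'$ is strictly increasing on $(1,\sqrt2]$ and strictly decreasing on $[\sqrt2,2)$, hence unimodal, so $G'(t)>\min\big(G'(1),\,G'(2)\big)$ for every $t\in(1,2)$. Since $G'(1)=0$ and $G'(2)=\tfrac34-\log 2>0$ (because $e^{3/4}>1+\tfrac34+\tfrac12(\tfrac34)^2>2$), we conclude $G'>0$ on $(1,2)$. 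Therefore $G$ is strictly increasing from $G(1)=0$, so $G(t)>0$, which is the upper inequality.

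It is worth recording that the restriction $M<2m$, i.e. $t<2$, is genuinely used for the upper bound and not merely for convenience: since $t\log t$ eventually dominates, $G(t)$ becomes negative for large $t$, so the upper estimate fails without the hypothesis $M<2m$. The lower estimate, by contrast, holds for all $t>1$, as the argument via $F''>0$ shows.
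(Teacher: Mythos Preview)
Your proof is correct. Both you and the paper begin with the same natural step: take logarithms and reduce to a single-variable inequality. The paper uses the variable $x := (M-m)/M \in (0,1/2)$, which is related to your $t=M/m$ by $x = 1 - 1/t$; after this change of variables the two target inequalities are literally the same, namely
\[
\frac{x^2}{2} \;\leq\; -\log(1-x) - x \;\leq\; x^2, \qquad 0 < x < \tfrac12.
\]
Where the arguments diverge is in how this one-variable inequality is established. The paper reads both bounds off the Taylor expansion $-\log(1-x) - x = \tfrac{x^2}{2} + \tfrac{x^3}{3} + \cdots$: the lower bound is immediate, and the upper bound follows since $\tfrac{x^2}{2} - \tfrac{x^3}{3} - \tfrac{x^4}{4} - \cdots \geq 0$ for $x < 1/2$. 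Your route, by contrast, is a derivative/monotonicity analysis, which is a little longer (especially for the upper bound, where you have to work around the sign change of $G''$ at $t=\sqrt 2$) but has the advantage of being self-contained and of making explicit your final observation that the hypothesis $M<2m$ is genuinely needed only for the upper estimate.
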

\begin{proof} If $0<x<1/2$, then
\begin{equation}
 x^2 / 2 \leq - \log(1-x) -x \leq x^2 \label{65}
\end{equation}
This is so since
$$
-\log(1-x)-x= x^2/2 + x^3/3 +... \geq x^2/2
$$
and on the other hand,
$$
x^2+\log(1-x)+x= x^2/2 - x^3/3 -... \geq  0.
$$
We now set $x:=(M-m)/M$. Clearly $0<x<1/2$, $1-x=m/M$ and $m-M=-Mx$. Hence
$$
M(-\log(1-x)-x) = -M \log(m/M) + m - M
$$
and \eqref{65} implies $Mx^2 / 2 \leq - M\log(1-x) - Mx \leq Mx^2$, or,
$$
%%\frac{M}{2} x^2 =
\frac{1}{2}\frac{(M-m)^2}{M} \leq  M \log(M/m) + m - M
%%M(-\log(1-x)-x)
\leq \frac{(M-m)^2}{M} . %%=Mx^2.
$$
\end{proof}

\begin{lemma}\label{log2}
If $0<m<M$ satisfies $M < 7m / 4$, then
$$
\exp\left(\frac{1}{4}\frac{(M-m)^2}{m}\right) \leq \left(\frac{m}{M}\right)^m e^{M-m} \leq
\exp\left(\frac{1}{2} \frac{(M-m)^2}{m}\right).
$$
\end{lemma}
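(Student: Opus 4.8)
The plan is to follow exactly the template of Lemma~\ref{log1}: reduce the claimed two-sided estimate to an elementary inequality for a single real variable and then substitute. Writing $L := (m/M)^m e^{M-m}$, we have
\[
\log L = (M-m) - m\log(M/m).
\]
Introduce $y := (M-m)/m$, so that $M/m = 1+y$, $m\log(M/m) = m\log(1+y)$, and $\log L = m\big(y - \log(1+y)\big)$. Since $0<m<M<7m/4$, we have $0 < y < 3/4$. Moreover $(M-m)^2/m = m y^2$, so the assertion of the lemma is equivalent, after dividing by $m$, to the scalar inequality
\[
\frac{y^2}{4} \ \le\ y - \log(1+y) \ \le\ \frac{y^2}{2}, \qquad 0 < y < \tfrac34 .
\]

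This scalar inequality is where the (routine) work lies, and it can be read off from the alternating Taylor series
\[
y - \log(1+y) \;=\; \sum_{k\ge 2} \frac{(-1)^k}{k}\, y^k \;=\; \frac{y^2}{2} - \frac{y^3}{3} + \frac{y^4}{4} - \cdots .
\]
For the upper bound one checks $y^2/2 - \big(y-\log(1+y)\big) = y^3/3 - y^4/4 + y^5/5 - \cdots \ge 0$, which follows by pairing consecutive terms, since $y^k/k \ge y^{k+1}/(k+1)$ whenever $0<y\le (k+1)/k$, in particular whenever $0<y\le 1$. For the lower bound one pairs the terms of the tail starting from the fourth to get $y - \log(1+y) \ge y^2/2 - y^3/3$ for $0<y\le 1$, and then $y^2/2 - y^3/3 \ge y^2/4$ is precisely the condition $y \le 3/4$. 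This proves the scalar inequality on $(0,\tfrac34)$, with room to spare since $3/4 < 4/3 < 1$.

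Putting the pieces together: the hypothesis $M<7m/4$ forces $y\in(0,3/4)$, so the scalar inequality applies; multiplying through by $m$, using $m y^2 = (M-m)^2/m$ and $m\big(y-\log(1+y)\big) = \log L$, and exponentiating yields
\[
\exp\!\Big(\tfrac14\tfrac{(M-m)^2}{m}\Big)\ \le\ \Big(\tfrac{m}{M}\Big)^m e^{M-m}\ \le\ \exp\!\Big(\tfrac12\tfrac{(M-m)^2}{m}\Big),
\]
as required. The only point demanding any care is the bookkeeping of constants so that the natural threshold $y<3/4$ for the scalar inequality corresponds exactly to the hypothesis $M<7m/4$; beyond that there is no real obstacle.
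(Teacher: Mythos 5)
Your proof is correct and follows essentially the same route as the paper: both substitute $y=M/m-1\in(0,3/4)$ and reduce the claim to the scalar inequality $y^2/4\le y-\log(1+y)\le y^2/2$, proved from the alternating Taylor series of $\log(1+y)$ by pairing consecutive terms. (The parenthetical ``$3/4<4/3<1$'' is a slip, but nothing in the argument depends on it --- only $y\le 3/4$ and $y\le 1$ are actually used.)
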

\begin{proof}
We have for $0<x<3/4$
$$
\log(1+x)-x=-x^2/2 + x^3/3 - x^4/4 + ... \geq -x^2/2.
$$
and
$$
-x^2/4 -\log(1+x)+x = x^2/4 - x^3/3 + x^4/4 + ... \geq x^2(1/4- x/3) >0.
$$
Hence, for these $x$,
\begin{equation}
-x^2 / 2  \leq \log(1+x) -x \leq - x^2 / 4 . \label{70}
\end{equation}
Fix $0<m<M< 7m / 4$. Set $x:=M/m-1$ so that $0<x<3/4$. We have
$$
-m(\log(1+x)-x)= m \log(m/M) + M - m,
$$
hence, \eqref{70} implies
$$
\frac{m}{4} x^2 = \frac{1}{4}\frac{(M-m)^2}{m} \leq m \log(m/M) + M - m \leq \frac{1}{2} \frac{(M-m)^2}{m} = \frac{m}{2} x^2.
$$
\end{proof}

\begin{lemma}\label{estimates}
Assume that $m_n= \alpha n^2, n \geq 4,$ for some $\alpha >0$. Then
$$
\exp \alpha \leq \left(\frac{m_{n}}{m_{n+1}}\right)^{m_{n}} \exp(m_{n+1}-m_n) \leq \exp( 9\alpha/4),
$$
and
$$
\exp \alpha \leq
 \left(\frac{m_{n+1}}{m_n}\right)^{m_{n+1}} \exp(m_n - m_{n+1})
 \leq \exp(4 \alpha).
$$
\end{lemma}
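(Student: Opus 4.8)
The plan is to recognize each of the two quantities as the middle term in Lemma~\ref{log1} (for the second estimate) and to control the other by the elementary logarithmic inequalities underlying Lemmas~\ref{log1}--\ref{log2}, applied with $m:=m_n=\alpha n^2$ and $M:=m_{n+1}=\alpha(n+1)^2$. With this choice $M-m=\alpha(2n+1)$ and $M/m=(1+1/n)^2$. One should first check the hypotheses of the two lemmas for $n\ge4$: $M<2m$ is equivalent to $(n+1)^2<2n^2$, i.e.\ to $n^2-2n-1>0$, which holds for all $n\ge3$; and $M<\tfrac74 m$ is equivalent to $4(n+1)^2<7n^2$, i.e.\ to $3n^2-8n-4>0$, whose positive root is $\tfrac{4+2\sqrt7}{3}\approx3.1$, so that this holds precisely for the integers $n\ge4$. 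This last inequality is exactly the reason the statement is restricted to $n\ge4$.

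For the second estimate, note that $\big(\tfrac{m_{n+1}}{m_n}\big)^{m_{n+1}}e^{m_n-m_{n+1}}$ is literally the middle term $\big(\tfrac{M}{m}\big)^{M}e^{m-M}$ of Lemma~\ref{log1}, so that lemma gives
$$
\exp\!\Big(\tfrac12\cdot\tfrac{(M-m)^2}{M}\Big)\ \le\ \Big(\tfrac{m_{n+1}}{m_n}\Big)^{m_{n+1}}e^{m_n-m_{n+1}}\ \le\ \exp\!\Big(\tfrac{(M-m)^2}{M}\Big).
$$
Here $\tfrac{(M-m)^2}{M}=\alpha\big(\tfrac{2n+1}{n+1}\big)^2$, and since $\tfrac{2n+1}{n+1}=2-\tfrac1{n+1}\in(1,2)$ one has $2\le\big(\tfrac{2n+1}{n+1}\big)^2<4$, the left-hand inequality being equivalent to $2n^2\ge1$. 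Substituting these two bounds on the exponent yields exactly $\exp(\alpha)\le\big(\tfrac{m_{n+1}}{m_n}\big)^{m_{n+1}}e^{m_n-m_{n+1}}\le\exp(4\alpha)$.

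For the first estimate the cleanest route is a direct computation:
$$
\log\!\Big[\Big(\tfrac{m_n}{m_{n+1}}\Big)^{m_n}e^{m_{n+1}-m_n}\Big]=\alpha\Big(2n+1-2n^2\log\big(1+\tfrac1n\big)\Big).
$$
Using the elementary inequalities $\tfrac1n-\tfrac1{2n^2}\le\log\big(1+\tfrac1n\big)\le\tfrac1n$ --- the right one being $\log(1+t)\le t$, the left one the left half of~\eqref{70} (equivalently, $t\mapsto\log(1+t)-t+\tfrac{t^2}{2}$ vanishes at $0$ and has derivative $\tfrac{t^2}{1+t}\ge0$) --- one gets $2n-1\le 2n^2\log\big(1+\tfrac1n\big)\le 2n$, hence the expression above lies between $\alpha$ and $2\alpha$, and in particular between $\alpha$ and $\tfrac94\alpha$. (The lower bound $\exp(\alpha)$ can alternatively be read off from Lemma~\ref{log2}, since there $\tfrac14\cdot\tfrac{(M-m)^2}{m}=\tfrac14\alpha\big(\tfrac{2n+1}{n}\big)^2\ge\alpha$; but the direct estimate delivers the upper constant more transparently.) There is no genuine obstacle here: the only real content is matching each of the four one-sided bounds to the correct elementary logarithmic inequality and carrying out the monotonicity checks $2\le\big(\tfrac{2n+1}{n+1}\big)^2<4$ and $3n^2-8n-4>0$ ($n\ge4$) on which the applicability of the two lemmas rests.
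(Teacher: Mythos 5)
Your proof is correct. For the second displayed estimate you argue exactly as the paper does: you identify $\big(\tfrac{m_{n+1}}{m_n}\big)^{m_{n+1}}e^{m_n-m_{n+1}}$ with the middle term of Lemma \ref{log1}, check $2\alpha\le (M-m)^2/M\le 4\alpha$, and read off $\exp(\alpha)\le\cdot\le\exp(4\alpha)$; your explicit verification of the hypotheses $M<2m$ (valid for $n\ge 3$) and $M<7m/4$ (valid precisely for the integers $n\ge 4$) is the content behind the paper's unexplained assertion \lqq $M<7m/4<2m$\rqq. For the first estimate, however, you take a genuinely different and in fact sharper route. The paper applies the upper half of Lemma \ref{log2} together with $(M-m)^2/m\le 9\alpha$, which only yields the bound $\exp\big(\tfrac12\cdot 9\alpha\big)=\exp(9\alpha/2)$ (or $\exp(81\alpha/32)$ if one uses the better bound $(M-m)^2/m\le \tfrac{81}{16}\alpha$ available for $n\ge4$); neither reaches the constant $9\alpha/4$ stated in Lemma \ref{estimates}, so the paper's own argument, read literally, proves the first inequality only with $9\alpha/2$ in place of $9\alpha/4$ --- consistent, incidentally, with the bound $b^{9/2}$ claimed in Proposition \ref{exampleexponential}. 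Your direct computation $\log[\cdots]=\alpha\big(2n+1-2n^2\log(1+\tfrac1n)\big)$, combined with the elementary bounds $\tfrac1n-\tfrac1{2n^2}\le\log(1+\tfrac1n)\le\tfrac1n$, pins the exponent between $\alpha$ and $2\alpha$, which does establish the stated upper bound $\exp(9\alpha/4)$ (indeed the stronger $\exp(2\alpha)$). So your argument is not only valid but also repairs a small numerical discrepancy in the paper's version; nothing downstream is affected, since every later application only needs some finite upper bound $K$.
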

\begin{proof}
Set, for $n \geq 4$, $m:=\alpha n^2$ and $M:=\alpha (n+1)^2$. Then $M-m=\alpha(2n+1)$ and $M < 7m/4 < 2m$. It is now easy to see that
$$
4 \alpha \leq \frac{(M-m)^2}{m} \leq 9 \alpha \ \ \mbox{and} \ \
2 \alpha \leq \frac{(M-m)^2}{M} \leq 4 \alpha,
$$
The conclusion follows from Lemmas \ref{log1} and \ref{log2}.

\end{proof}

\begin{proof} \textbf{of Proposition \ref{exampleexponential}.}
In this case the maximum point $r_m$ of $r^m v(r)$ is $r_m=(m/ap)^{1/p}$,  and $v(r_m)=\exp(-m/p)$ for each $m \in \N$. Therefore
$$
A(m_n,m_{n+1})^p = \left(\frac{m_{n}}{m_{n+1}}\right)^{m_{n}} \exp(m_{n+1}-m_n),
$$
and
$$
B(m_n,m_{n+1})^p = \left(\frac{m_{n+1}}{m_n}\right)^{m_{n+1}} \exp(m_n - m_{n+1}).
$$
We apply Lemma \ref{estimates} for $\alpha:=p (\log b)$ to conclude
$$
b^p=\exp(\alpha) \leq A(m_n,m_{n+1})^p \leq \exp(9\alpha / 4) = b^{9p/4} ,
$$
and
$$
b^p=\exp(\alpha) \leq B(m_n,m_{n+1})^p \leq \exp(4 \alpha) = b^{4p}.
$$
This implies the inequalities in the statement.
\end{proof}

\begin{remark} \label{rem9}
In view of Lemmas \ref{luskylemma1}--\ref{luskylemma3} it is of interest to compare
the expressions $A(m,n) $ and $B(m,n)$. Let us show here that
$$A(m_n,m_{n+1}) \leq B(m_n,m_{n+1}) \ \ (*)$$ for $v(r) = \exp(-ar^p)$ and
$m_n = \alpha n^2, \alpha >0$. It is enough to do the calculation for $m_n = n^2$. In this case we have
$$
A(m_n,m_{n+1})= \Big( \frac{n}{n+1}\Big)^{2n^2}e^{2n+1} \ \ , \ \
B(m_n,m_{n+1})=  \Big( \frac{n+1}{n}\Big)^{2(n+1)^2}e^{-2n-1},
$$
so that $(*)$ is equivalent to
$$
\frac{\exp(n+(1/2))}{\left(1 + \frac{1}{n}\right)^{n^2}} \leq \frac{\left(1 + \frac{1}{n}\right)^{(n+1)^2}}{\exp(n+(1/2))},
$$
or %%To do this, we have to prove that
\begin{equation}
e \leq \Big(1 + \frac{1}{n}\Big)^{\frac{2n^2+2n+1}{2n+1}} =: \gamma_n.
\label{105}
\end{equation}
But $\gamma_n > \left(1 + \frac{1}{n}\right)^{n + \frac{1}{2}}=: \eta_n.$ Thus,
\eqref{105} holds true, since the sequence $(\eta_n)_n$
tends to $e$ as $n \rightarrow \infty$, and it is decreasing.
To see this last fact, we write %%can proceed as follows:
\begin{equation}
\Big( \frac{\eta_n}{\eta_{n+1}} \Big)^2
= \frac{(n+1)^{4n+4}}{n^{2n+1} (n+2)^{2n+3}}
= \bigg(
\frac{n+1}{n^{\frac{2n+1}{4n+4}} (n+2)^{\frac{2n+3}{4n+4}}}
\label{107}
\bigg)^{4n+n}
\end{equation}
The logarithm of the expression in the last large parenthesis is of the form
$$
\log x - \big(r \log (x-1) + (1-r) \log (x+1) \big),
$$
where $x > 1$ and $0< r < 1$. This expression is positive, since log
is a concave function on $[0,\infty[$. Hence, \eqref{107} is larger than 1.
\end{remark}

\section{The space of multipliers $( H^\infty_v(\C), \ell^p ) $.}\label{sec4}

In this final section we show that the above results can be applied to
determine some multiplier spaces. Recall that if $A$ and $B$ are vector spaces of complex sequences containing the space of all the sequences with finitely many non-zero coordinates, then the set of multipliers from $A$ into $B$ is

$$(A,B):= \{ c=(c_n) \, : \, (c_na_n) \in B \ \forall (a_n) \in A \}. $$

Given a strictly increasing, unbounded sequence $J= (m_n)_{n=0}^\infty
\subset \N$  and  $1 \leq p,q \leq \infty$ we denote as in \cite{BlZ}, Definition 2,
\begin{equation}
\ell^J (p,q) := \Big\{ (a_m)_{m=0}^\infty \, : \, \Big( \sum_{m=m_n+1}^{m_{n+1}}
|a_m|^p \Big)^{1/p} \in \ell_q \Big\},
\end{equation}
with the obvious changes when $p$ or $q$ is $\infty$. The space $\ell^J (p,q)$
is a Banach space when endowed with  the canonically defined norm. Observe
that $\ell^J(p,p) = \ell_p$.

\begin{lemma}
\label{lem5.1}
For $1 \leq p \leq \infty$ we have
\begin{equation}
\big( \ell^J(2,\infty) , \ell^p \big) =
\ell^J(r,s)
\end{equation}
where (a) $r=2p/(2-p)$, $s=p$, if $1 \leq p < 2$, (b) $r=\infty$,
$s=p$, if $2 \leq p < \infty$, and (c) $r= s=\infty$, if $p = \infty$.
\end{lemma}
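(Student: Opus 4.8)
The plan is to compute the multiplier space $(\ell^J(2,\infty),\ell^p)$ directly by reducing it, block by block, to classical sequence-space duality and Hölder-type inequalities. Write $I_n := \{m_n+1,\dots,m_{n+1}\}$ for the $n$-th block, so a sequence $a=(a_m)$ lies in $\ell^J(2,\infty)$ precisely when the block $\ell^2$-norms $\alpha_n := (\sum_{m\in I_n}|a_m|^2)^{1/2}$ are uniformly bounded, and a sequence $b$ lies in $\ell^p$ precisely when $\sum_n \sum_{m\in I_n}|b_m|^p < \infty$ (with the sup over all $m$ when $p=\infty$). A multiplier $c=(c_m)$ must satisfy $(c_ma_m)\in\ell^p$ for every $a$ with $\sup_n\alpha_n<\infty$; the key observation is that this condition decouples across the blocks $I_n$, so I would first establish the pointwise statement: for a fixed finite block $I_n$, the multipliers from $(\ell^2(I_n),\|\cdot\|_2)$ into $(\ell^p(I_n),\|\cdot\|_p)$ are exactly $\ell^r(I_n)$ with $r$ as in cases (a)–(c), and the multiplier norm equals the $\ell^r(I_n)$-norm of $c$ restricted to $I_n$. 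For $1\le p<2$ this is the standard fact that $(c_m)$ maps $\ell^2\to\ell^p$ iff $(c_m)\in\ell^r$ with $1/r = 1/p - 1/2$, which one gets from Hölder in one direction ($\|ca\|_p \le \|c\|_r\|a\|_2$ since $1/p = 1/r + 1/2$) and, in the other direction, by testing against $a_m = |c_m|^{r/2}\,\overline{\mathrm{sgn}}\,c_m$ suitably normalized. For $2\le p\le\infty$ one has $\|ca\|_p\le \|c\|_\infty\|a\|_2$ trivially (as $\|\cdot\|_p\le\|\cdot\|_2$ on a fixed block, up to nothing since $\ell^p$-norms decrease in $p$), and testing against unit vectors $a=e_m$ shows $c\in\ell^\infty$ is necessary; here $r=\infty$.

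Next I would patch the blocks together. Having the block-wise identification, a multiplier $c$ is characterized by: $\sup_n \|c|_{I_n}\|_r < \infty$ forces... no — more precisely, one needs the global condition that, writing $\beta_n := \|c|_{I_n}\|_{\ell^r(I_n)}$, the sequence $(\beta_n)_n$ lies in $\ell^s$ with $s=p$ (and $s=\infty$ when $p=\infty$). The direction "$\ell^J(r,s)\subseteq(\ell^J(2,\infty),\ell^p)$" is the easy one: given $c\in\ell^J(r,s)$ and $a\in\ell^J(2,\infty)$, apply the block Hölder estimate on each $I_n$ to get $\sum_{m\in I_n}|c_ma_m|^p \le \beta_n^p\,\alpha_n^p \le (\sup_k\alpha_k)^p\,\beta_n^p$, then sum over $n$ and use $(\beta_n)\in\ell^p$; the $p=\infty$ case is the analogous sup estimate. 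For the converse, I would run a gliding-hump / uniform boundedness argument: suppose $c\notin\ell^J(r,s)$; then either some individual block already fails to be a multiplier (handled by the block-level result and a single bad block), or $(\beta_n)\notin\ell^s$, in which case I choose, on each block $I_n$, an extremal test vector $a^{(n)}$ with $\|a^{(n)}\|_2\le 1$ and $\sum_{m\in I_n}|c_m a^{(n)}_m|^p \ge \tfrac12\beta_n^p$ (possible by the block-level duality), rescale the blocks by a fixed $\ell^\infty$ weight to keep $a:=\bigoplus_n a^{(n)}$ in $\ell^J(2,\infty)$ while arranging $\sum_n\sum_{m\in I_n}|c_ma_m|^p = \infty$, contradicting $c\in(\ell^J(2,\infty),\ell^p)$. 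The case $p=\infty$ uses that a sequence $c$ with $\sup_n\beta_n = \sup_m|c_m| = \infty$ fails to multiply the all-ones sequence (which lies in $\ell^J(2,\infty)$ since each block has bounded length... wait — blocks can have growing length, so one instead uses $a_m = 1/\sqrt{|I_n|}$ on $I_n$, which has block $\ell^2$-norm $1$, and notes $c_m a_m$ is unbounded if $|c_m|$ grows faster than $\sqrt{|I_n|}$; a small extra argument with normalized bumps concentrated on the single index realizing $\sup_{m\in I_n}|c_m|$ inside each block closes this). Finally I would record that the resulting norm identification is isometric, or at least an equivalence of norms, which follows from tracking the constants in the block-level Hölder and testing steps.

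The main obstacle I expect is the converse inclusion in the borderline-length bookkeeping: since the blocks $I_n$ of $J$ may have lengths $|I_n|=m_{n+1}-m_n$ tending to infinity, the naive test sequences (constant on a block, or a single unit vector) do not simultaneously witness both the in-block $\ell^r$ behavior and the across-block $\ell^s$ behavior, so the gliding-hump construction must be done carefully — selecting a near-optimal in-block extremizer for the $\ell^2\to\ell^p$ multiplier norm on each $I_n$, then combining these across $n$ with an $\ell^\infty$ (for $s=p$ finite) or constant (for $s=\infty$) weight that preserves membership of the assembled sequence in $\ell^J(2,\infty)$ while destroying membership of the product in $\ell^p$. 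Once the block-level duality $( \ell^2(I_n),\ell^p(I_n))=\ell^r(I_n)$ isometrically is in hand, everything else is routine; that block-level statement is itself just finite-dimensional Hölder duality and should be dispatched in a couple of lines. It is also worth remarking that case (c), $p=\infty$, recovers $(\ell^J(2,\infty),\ell^\infty)=\ell^J(\infty,\infty)=\ell^\infty$, consistent with the fact that $\ell^\infty$ is solid.
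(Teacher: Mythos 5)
Your argument is correct, but it takes a genuinely different route from the paper, which gives no proof at all: Lemma \ref{lem5.1} is there deduced in one line from Theorem 23 of \cite{BlZ}, with the remark that the proof is modelled on Kellogg's theorem \cite{K}. Your block-by-block scheme --- the finite-dimensional H\"older duality $(\ell^2(I_n),\ell^p(I_n))=\ell^r(I_n)$ on each block, the easy inclusion by summing the block estimates against $\sup_n\alpha_n$, and the converse by assembling normalized in-block extremizers (the weight $|c_m|^{r/2}$ when $1\le p<2$, a single unit vector at the index maximizing $|c_m|$ when $p\ge 2$) --- is in substance the Kellogg-style argument that the citation points to, so you have reconstructed the proof of the quoted result rather than found a shortcut. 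Two small simplifications: since every block $I_n$ is finite, each $\beta_n=\|c|_{I_n}\|_{\ell^r(I_n)}$ is automatically finite, so your alternative of ``a single bad block'' never occurs, and no rescaling by an $\ell^\infty$ weight is needed --- the plain direct sum of norm-one extremizers already lies in the unit ball of $\ell^J(2,\infty)$ and witnesses $\sum_n\beta_n^p=\infty$ (no genuine gliding hump or uniform boundedness principle is required); and your worry about blocks of growing length is resolved exactly as you indicate, by testing with unit vectors rather than with constant sequences. What your route buys is a self-contained proof with explicit constants (indeed an isometric identification of the multiplier norm with the $\ell^J(r,s)$-norm); what the paper's route buys is brevity and a pointer to the general computation of $(\ell^J(p_1,q_1),\ell^J(p_2,q_2))$ for arbitrary exponents.
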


\begin{proof}
This is a direct consequence of \cite{BlZ}, Theorem 23. As it is mentioned in that
paper, which treats more complicated cases, the proof in the case of our Lemma is similar to that of \cite{K}, Theorem 1.
\end{proof}

\begin{theorem}
\label{th5.2}
Let $v$ be a radial weight and let  $J= (m_n)_{n=0}^\infty $ be a strictly increasing, unbounded sequence of positive integers such that for some $b > 2$ and $K \geq b$ we have
\begin{equation}
b \leq \min(A(m_n,m_{n+1}),B(m_n,m_{n+1})) \leq \max(A(m_n,m_{n+1}),
B(m_n,m_{n+1})) \leq K.
\label{135}
\end{equation}
Let $ 1 \leq p \leq  \infty$. Then $(\lambda_m)_{m=1}^\infty$ is a multiplier
from $H_v^\infty(\C)$ into $\ell_p$ if and only if
\begin{equation}
\Big( \big( (v(r_{m_n}) r_{m_n}^m )^{-1} |\lambda_m| \big)_{m=m_n+1}^{m_{n+1}}
\Big)_{n=0}^\infty \in \ell^J(r,s) ,
\end{equation}
where

\noindent (a) $r=2p/(2-p)$, $s=p$, if $1 \leq p < 2$,

\noindent (b) $r=\infty$, $s=p$, if $2 \leq p < \infty$, and

\noindent (c) $r= s=\infty$, if $p = \infty$.

\end{theorem}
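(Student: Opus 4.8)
The plan is to derive the statement formally from two results already established in the excerpt: the description of the solid hull $S(v,\C)$ of $H_v^\infty(\C)$ in Theorem~\ref{solidhull}, and the multiplier identity of Lemma~\ref{lem5.1}. Observe first that $v$ is a weight in the sense of Section~\ref{sec1} and that hypothesis~\eqref{135} is literally~\eqref{35}, so Theorem~\ref{solidhull} applies; the index $m_0$ is harmless because the finitely many coefficients with small index do not enter the condition~\eqref{37}. Since $\ell_p$ is a solid sequence space for every $1\le p\le\infty$, Fact~5 in the list of Section~\ref{sec1} gives $(H_v^\infty(\C),\ell_p)=(S(H_v^\infty(\C)),\ell_p)=(S(v,\C),\ell_p)$, so it suffices to identify the multipliers from $S(v,\C)$ into $\ell_p$.

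Next I would pass to block coordinates by a diagonal change of variables. For $n\ge 0$ and $m_n<m\le m_{n+1}$ set $w_m:=v(r_{m_n})\,r_{m_n}^{\,m}>0$, and define $T\big((b_m)_m\big):=(w_m b_m)_m$. Because the $m_n$ are integers, $\{m:m_n<m\le m_{n+1}\}=\{m_n+1,\dots,m_{n+1}\}$, and a term-by-term comparison of~\eqref{37} with the definition of $\ell^J(2,\infty)$ shows that $v(r_{m_n})\big(\sum_{m_n<m\le m_{n+1}}|b_m|^2 r_{m_n}^{2m}\big)^{1/2}=\big(\sum_{m=m_n+1}^{m_{n+1}}|w_m b_m|^2\big)^{1/2}$. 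Hence $(b_m)_m\in S(v,\C)$ if and only if $(w_m b_m)_m\in\ell^J(2,\infty)$; in other words $T$ is a bijection (indeed a canonical isometric isomorphism) of $S(v,\C)$ onto $\ell^J(2,\infty)$.

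Then I would translate the multiplier condition through $T$. Given $(\lambda_m)_m$, put $\mu_m:=w_m^{-1}\lambda_m$, and for $(b_m)_m\in S(v,\C)$ write $c_m:=w_m b_m$, so that $(c_m)_m\in\ell^J(2,\infty)$ and $\lambda_m b_m=\mu_m c_m$. Therefore $(\lambda_m b_m)_m\in\ell_p$ for all $(b_m)_m\in S(v,\C)$ precisely when $(\mu_m c_m)_m\in\ell_p$ for all $(c_m)_m\in\ell^J(2,\infty)$, i.e.\ precisely when $(\mu_m)_m\in(\ell^J(2,\infty),\ell_p)$. By Lemma~\ref{lem5.1} this space equals $\ell^J(r,s)$ with $(r,s)$ as in~(a)--(c); substituting back $w_m=v(r_{m_n})\,r_{m_n}^{\,m}$ turns $(\mu_m)_m\in\ell^J(r,s)$ into exactly the condition displayed in the statement, which finishes the argument.

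Since the deep content lies in Theorem~\ref{solidhull} and in Lemma~\ref{lem5.1}, I do not expect a real obstacle here; the only step needing genuine care is the middle one. There I must check that the partition of $\N$ into the blocks $(m_n,m_{n+1}]$ is the very partition used to define $\ell^J(2,\infty)$, that the diagonal weights $w_m$ are finite and strictly positive so that $T$ is invertible, and that the finitely many initial coordinates may be disregarded, since altering finitely many entries changes neither membership in a solid space nor the multiplier property.
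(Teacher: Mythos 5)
Your argument is correct and follows essentially the same route as the paper: reduce to the solid hull via Fact~5, identify $S(v,\C)$ with $\ell^J(2,\infty)$ through the diagonal weights $v(r_{m_n})r_{m_n}^{m}$, and invoke Lemma~\ref{lem5.1}. The only difference is that you spell out explicitly the diagonal isomorphism step which the paper dismisses as ``easy to see,'' and your checks there (positivity of the weights, matching of the blocks) are exactly the right ones.
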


\begin{proof}
Since $\ell^p$ is a solid space, we can apply Fact 5 in the introduction to
conclude
\begin{equation}
\big( H_v^\infty( \C) , \ell_p \big)=
\big( S( H_v^\infty( \C)) , \ell_p \big) .
\end{equation}
By Theorem \ref{solidhull} it is easy to see that
$(\lambda_m)_{m=0}^\infty \in \big(  S( H_v^\infty( \C)) , \ell_p \big)$, if and
only if
\begin{equation}
\Big( \big( (v(r_{m_n}) r_{m_n}^m )^{-1} |\lambda_m| \big)_{m=m_n+1}^{m_{n+1}}
\Big)_{n=0}^\infty \in \big( \ell^J(2, \infty) , \ell_p\big) .
\end{equation}
The conclusion now follows from Lemma \ref{lem5.1}
\end{proof}

The next corollary is a consequence of Theorem \ref{th5.2} and
Proposition \ref{exampleexponential}.

\begin{corollary}
Let $v(r) = e^{-r}$, $r \in (0,\infty)$ and $1 \leq p \leq \infty$.
Then, the space of multipliers $\big( H_v^\infty(\C), \ell_p \big)$ is the set of sequences
$(\lambda_m)_{m=0}^\infty$ such that
\begin{equation}
\Big( \sum_{n=1}^\infty  \Big( \sum_{m=n^2 +1}^{(n+1)^2}
\big( |\lambda_m| e^{-n^2} n^{-2m}  \big)^{\frac{2p}{2-p}}
\Big)^{\frac{2-p}{2}} \Big)^{\frac{1}{p}} < \infty  ,
\end{equation}
if $1 \leq p < 2$,
\begin{equation}
\Big( \sum_{n=1}^\infty  \Big( \max\limits_{n^2 < m \leq (n+1)^2}
 |\lambda_m| e^{n^2} n^{-2m}  \Big)^{p} \Big)^{\frac{1}{p}} < \infty  ,
\end{equation}
if $2 \leq p < \infty$, and
\begin{equation}
\sup\limits_{n \in \N } \Big( \max\limits_{n^2 < m \leq (n+1)^2}
 |\lambda_m| e^{n^2} n^{-2m} \Big) < \infty  ,
\end{equation}
if $p =  \infty$,

\end{corollary}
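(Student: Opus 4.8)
The plan is to specialize Theorem \ref{th5.2} to the concrete weight $v(r)=e^{-r}$, for which Proposition \ref{exampleexponential} (with $p=1$, $a=1$, and $b=e$) furnishes the explicit Lusky sequence $m_n=n^2$; by that proposition the pair $(A,B)$ then satisfies the two-sided bounds $e\le A(m_n,m_{n+1})\le e^{9/2}$ and $e\le B(m_n,m_{n+1})\le e^4$ for $n\ge 4$, so condition \eqref{135} holds with $b=e>2$ and $K=e^{9/2}$ (after discarding the finitely many initial terms, which as noted in the remark after Theorem \ref{solidhull} do not affect the characterization). Thus Theorem \ref{th5.2} applies verbatim with $J=(n^2)_n$.

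Next I would compute the normalizing weights appearing in the statement of Theorem \ref{th5.2}. For $v(r)=e^{-r}$ the maximum point is $r_m=m$ and $v(r_m)=e^{-m}$, hence $r_{m_n}=n^2$ and $v(r_{m_n})=e^{-n^2}$, so that
$$
\big(v(r_{m_n})\, r_{m_n}^m\big)^{-1}=e^{n^2}\, n^{-2m}.
$$
Substituting this into the conclusion of Theorem \ref{th5.2} shows that $(\lambda_m)_m$ is a multiplier from $H_v^\infty(\C)$ into $\ell_p$ precisely when the doubly-indexed array $\big(|\lambda_m| e^{n^2} n^{-2m}\big)_{n^2<m\le(n+1)^2}$, $n\in\N$, lies in $\ell^J(r,s)$ with $(r,s)$ given by the three cases. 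It then only remains to unwind the definition of the norm of $\ell^J(r,s)$: for $1\le p<2$ one has $r=2p/(2-p)$, $s=p$, giving the iterated sum $\big(\sum_n(\sum_{n^2<m\le(n+1)^2}(\,\cdot\,)^{2p/(2-p)})^{(2-p)/2}\big)^{1/p}$; for $2\le p<\infty$ one has $r=\infty$, $s=p$, giving $\big(\sum_n(\max_{n^2<m\le(n+1)^2}(\,\cdot\,))^p\big)^{1/p}$; and for $p=\infty$ one has $r=s=\infty$, giving $\sup_n\max_{n^2<m\le(n+1)^2}(\,\cdot\,)$. These are exactly the three displayed conditions.

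There is essentially no obstacle here: the corollary is a mechanical specialization, and the only points requiring a word of care are (i) checking that the sequence $m_n=n^2$ from Proposition \ref{exampleexponential} does satisfy \eqref{135} with some admissible $b,K$ — which is immediate from that proposition once we note $b=e>2$ — and (ii) correctly evaluating $r_m$ and $v(r_m)$ for this weight and plugging them into the formula of Theorem \ref{th5.2}. One should also remark that the restriction $n\ge 4$ in Proposition \ref{exampleexponential} is harmless, since altering or deleting finitely many blocks changes neither membership in $\ell^J(r,s)$ nor, by the remarks following Theorem \ref{solidhull}, the solid hull; alternatively one may simply reindex. I would present the proof in one short paragraph: invoke Proposition \ref{exampleexponential} to get the Lusky numbers and verify \eqref{135}, record $r_{m_n}=n^2$, $v(r_{m_n})=e^{-n^2}$, apply Theorem \ref{th5.2}, and finally spell out the $\ell^J(r,s)$-norm in each of the three ranges of $p$.
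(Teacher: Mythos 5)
Your proposal is correct and is exactly the argument the paper intends: the corollary is stated there as an immediate consequence of Theorem \ref{th5.2} and Proposition \ref{exampleexponential}, and you supply precisely the missing details ($m_n=n^2$, $r_{m_n}=n^2$, $v(r_{m_n})=e^{-n^2}$, hence the factor $e^{n^2}n^{-2m}$, plus the harmless restriction $n\ge 4$). Your computation also confirms that the factor $e^{-n^2}$ in the first displayed condition of the corollary should read $e^{n^2}$, consistent with the other two cases.
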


\vspace{.5cm}

\noindent \textbf{Acknowledgements.} The authors are indebted to the referee for the careful reading of the manuscript and the helpful suggestions.

The research of Bonet was partially supported by MTM2013-43540-P, GVA Prometeo II/2013/013 and GVA ACOMP/2015/186. The research of Taskinen was partially supported by the Magnus Ehrnrooth and the V\"ais\"al\"a
Foundations.

\noindent \textbf{Authors' addresses:}%
\vspace{\baselineskip}%

Jos\'e Bonet: Instituto Universitario de Matem\'{a}tica Pura y Aplicada IUMPA,
Universitat Polit\`{e}cnica de Val\`{e}ncia,  E-46071 Valencia, Spain

email: jbonet@mat.upv.es \\

Jari Taskinen: Department of Mathematics and Statistics, P.O. Box 68,
University of Helsinki, 00014 Helsinki, Finland.

email: jari.taskinen@helsinki.fi

\end{document}